\newtheorem{prop}{Proposition}[section]
\newtheorem{thm}[prop]{Theorem}
\newtheorem{coroll}[prop]{Corollary}
\newtheorem{lemma}[prop]{Lemma}
\newtheorem{defi}[prop]{Definition}
\theoremstyle{remark}
\newtheorem{rmk}[prop]{Remark}
\theoremstyle{definition}
\newcommand{\enne}{\mathbb{N}}
\newcommand{\erre}{\mathbb{R}}
\newcommand{\dom}{\mathsf{D}}
\newcommand{\cA}{\mathscr{A}}
\newcommand{\E}{\mathop{{}\mathbb{E}}\nolimits}
\newcommand{\cF}{\mathscr{F}}
\newcommand{\cL}{\mathscr{L}}
\renewcommand{\P}{\mathbb{P}}
\newcommand{\ep}[1]{{#1}^\varepsilon}
\renewcommand{\geq}{\geqslant}
\renewcommand{\leq}{\leqslant}
\DeclarePairedDelimiter\abs{\lvert}{\rvert}
\DeclarePairedDelimiter\norm{\lVert}{\rVert}
\DeclarePairedDelimiterX\ip[2]{\langle}{\rangle}{#1,#2}
\numberwithin{equation}{section}
\title{On mild solutions to some dissipative SPDEs \\ on \(L^p\) spaces
  with additive noise}
\author{Carlo Marinelli\thanks{%
    Department of Mathematics, University College London, Gower
    Street, London WC1E 6BT, United Kingdom. URL:
    \texttt{http://goo.gl/4GKJP}}}
\date{\normalsize December 29, 2023}
\begin{document}
\maketitle

\begin{abstract}
  We establish well-posedness in the mild sense for a class of
  stochastic semilinear evolution equations on \(L^p\) spaces on
  bounded domains of \(\erre^n\) with a nonlinear drift term given by
  the superposition operator generated by a monotone function on the
  real line with power-like growth. The noise is of additive type with
  respect to a cylindrical Wiener process, with diffusion coefficient
  not necessarily of \(\gamma\)-Radonifying type.
\end{abstract}



\section{Introduction}
Consider the stochastic evolution equation in mild form
\begin{equation}
  \label{eq:0}
  u(t) + \int_0^t S(t-s) f(u(s))\,ds = S(t)u_0 + \int_0^t S(t-s)B(s)\,dW(s)
\end{equation}
on a finite time interval $[0,T]$, where \(S\) is a strongly
continuous contraction semigroup on \(L^q:=L^q(G)\), with $G$ a
bounded domain of $\erre^n$ and $q \geq 1$, $f\colon \erre\to\erre$ is
an increasing (possibly discontinuous) function of power-like growth,
$W$ is a cylindrical Wiener noise on a separable Hilbert space $H$,
and \(B\) is a (random) \(\cL(H,L^q)\)-valued process such that the
stochastic integral on the right-hand side of \eqref{eq:0} is a
well-defined \(L^q\)-valued process.
Our main aim is to find criteria for the existence and uniqueness of
solutions to \eqref{eq:0} in terms of integrability conditions of the
stochastic convolution.

Since it is not assumed that \(q \geq 2\) and that the diffusion
coefficient \(B\) takes values in the class of \(\gamma\)-Radonifying
operators \(\gamma(H,L^q)\), approaching the problem by stochastic
calculus techniques (in particular It\^o's formula) seems
hard. Therefore we use a classical idea of changing variable
(informally called ``subtracting the stochastic convolution'') that
reduces the stochastic evolution equation to a deterministic evolution
equation with random drift term. This technique works seamlessly also
for \(q<2\) and only requires the stochastic convolution to be
sufficiently integrable in time and space, but does not need \(B\) to
be \(\gamma(H,L^q)\)-valued. In fact, solutions are constructed
pathwise, hence all arguments would still work, with minimal changes,
if the stochastic convolution were replaced by any stochastic
processes taking values in an \(L^q\) space, thus allowing to treat
equations driven by L\'evy processes, for instance. For the same
reason, there is no need to assume the existence of any moments of the
stochastic convolution. This is likely harder in the stochastic
calculus approach, as it relies on maximal estimates, such as the
Burkholder-Davis-Gundy inequality, that hold in expectation. On the
other hand, it is not clear whether it is possible to adapt the
deterministic method to equations with multiplicative noise, and,
perhaps more importantly, if \(q \geq 2\) and \(B\) is
\(\gamma(H,L^q)\)-valued, so that both approaches are applicable, the
stochastic calculus approach seem to provide better results
(cf. \cite{cm:SIMA18} and Remark \ref{rmk:confr} below).

The literature on semilinear stochastic PDEs is very rich (see, e.g.,
\cite{DPZ} for basic results and references), but the problem
considered here does not seem to fall into the scope of existing
results, at least not entirely. A basic source of difficulties is that
superposition operators on \(L^q\) spaces are not locally Lipschitz
continuous, apart from trivial cases (e.g. if they are linear).
If \(f\) is continuous and \(S\) is an analytic semigroup, comparable
results have been obtained in \cite{KvN2} by methods of stochastic
evolution equations in UMD Banach spaces (see also \cite{cerrai03} for
earlier related results). Moreover, in the case where \(S\) is the
heat semigroup and the stochastic convolution is continuous in space
and time, existence and uniqueness of \(L^1\)-valued pathwise
solutions is discussed in \cite{Barbu:lincei}, under still weaker
assumptions on \(f\), although solutions are only adapted but not
necessarily measurable processes. If \(B\) takes values in
\(\gamma(H,L^2)\), i.e. it is of Hilbert-Schmidt class, well-posedness
results are obtained in \cite{cm:AP18} by a variational approach,
under growth conditions on \(f\) analogous to those of
\cite{Barbu:lincei}. Basic results for the case where \(q=2\), \(f\)
is a polynomial of odd order, and the stochastic convolution is
continuous in space and time and has finite moments of all order are
discussed in \cite[\S4.2]{DP:K}, which has been our main motivation.

The rest of the text is organized as follows: \S\ref{sec:prel} is
dedicated to auxiliary material, most notably estimates for mild
solutions to linear deterministic evolution equations, while
statements and proofs of the main results are the subject of
\S\ref{sec:wp}.

\section{Preliminaries}
\label{sec:prel}
\subsection{Notation}
The sets of positive and strictly positive real numbers will be
denoted by \(\erre_+\) and \(\erre_+^\times\), respectively.
Let \(I\) be an open interval of the real line. The right
Dini derivatives of a function \(f\colon I \to \erre\) at
\(x_0 \in I\) are defined by
\[
  D^+f(x_0) := \limsup_{x \to x_0+} \frac{f(x)-f(x_0)}{x-x_0},
  \qquad
  D_+f(x_0) := \liminf_{x \to x_0+} \frac{f(x)-f(x_0)}{x-x_0},
\]
The left Dini derivatives \(D^-f(x_0)\) and \(D_-f(x_0)\) are defined
analogously, replacing the limits from the right with limits from the
left.

All random quantities are defined on a probability space
\((\Omega,\cF,\P)\) endowed with a complete right-continuous
filtration \((\cF_t)_{t\in[0,T]}\), with \(T \in \erre_+^\times\) a
fixed time horizon. In particular, a fixed cylindrical Wiener process
on a real separable Hilbert space \(H\) will be denoted by \(W\).

Let \(E\) be a Banach space. Its dual will be denoted by \(E'\) and,
if endowed with the \(\sigma(E',E)\) topology, by \(E'_s\). Let \(F\)
be a further Banach space. The vector space of continuous linear maps
from \(E\) to \(F\) is denoted by \(\cL(E,F)\).  If
\(A \subset E \times F\), in particular if \(A\) is a (multivalued)
unbounded operator, the domain of \(A\) is defined by
\(\dom(A) := \{x \in E:\, A(x) \neq \varnothing\}\).
Let \(S\) be a strongly continuous semigroup of linear operators on
\(E\). The (deterministic) convolution of \(S\) with a measurable
function \(\phi\colon [0,T] \to E\) such that
\(S(t-\cdot)\phi \in L^1(0,t;E)\) for every \(t \in [0,T]\) is
defined by
\[
  S \ast \phi := \Bigl( t \mapsto \int_0^t S(t-s)\phi(s)\,ds \Bigr).
\]
Similarly, if \(E\) has the UMD property, the stochastic convolution
of \(S\) with a stochastic process
\(\Phi\colon \Omega \times [0,T] \to \cL(H,E)\) such that
\(S(t-\cdot)\Phi\) is stochastically integrable with respect to \(W\)
is the stochastic process defined by
\[
  S \diamond \Phi := \Bigl( (\omega,t) \mapsto \int_0^t
  S(t-s)\Phi(s)\,dW(s) \Bigr).
\]
We refer to \cite{vNVW} for details on stochastic convolutions and the
semigroup approach to stochastic evolution equations on UMD spaces.

\subsection{Some elementary inequalities}
The proof of the simple inequalities of the following two lemmas are
included for completeness.
\begin{lemma}
  \label{lm:xya}
  Let \(x,y \in \erre_+\). If \(a \in [0,1]\), then
  \[
    2^{a-1} (x^a + y^a)  \leq (x+y)^a \leq x^a + y^a.
  \]
  If \(a \in [1,\infty\mathclose[\), then
  \[
    x^a + y^a \leq (x+y)^a \leq 2^{a-1} (x^a + y^a).
  \]
\end{lemma}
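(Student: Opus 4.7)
The plan is to reduce both chains to a one-variable inequality by homogeneity and then use only two elementary facts: the convexity (resp.\ concavity) of \(t\mapsto t^a\) on \(\erre_+\) for \(a\geq 1\) (resp.\ \(a\in[0,1]\)), and the scalar comparison \(s^a\geq s\) if \(a\in[0,1]\), \(s^a\leq s\) if \(a\geq 1\), valid for \(s\in[0,1]\).

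First I would dispose of the trivial case \(x+y=0\), in which every term vanishes. Assuming \(x+y>0\), I set \(s:=x/(x+y)\in[0,1]\), so that \(y/(x+y)=1-s\). Dividing each inequality by \((x+y)^a\), all four bounds collapse to statements about the single function
\[
  g(s) := s^a + (1-s)^a, \qquad s\in[0,1],
\]
namely \(2^{a-1}g(s)\leq 1\leq g(s)\) for \(a\in[0,1]\) and \(g(s)\leq 1\leq 2^{a-1}g(s)\) for \(a\geq 1\).

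Next I would prove the ``subadditive/superadditive'' halves, i.e.\ the comparisons of \(g(s)\) with \(1\). For \(a\in[0,1]\) one has \(s^a\geq s\) and \((1-s)^a\geq 1-s\) for \(s\in[0,1]\), hence \(g(s)\geq s+(1-s)=1\); the opposite estimates hold for \(a\geq 1\), giving \(g(s)\leq 1\). For the ``Jensen'' halves I would apply the definition of concavity/convexity to \(t\mapsto t^a\) at the midpoint: for \(a\in[0,1]\), concavity yields
\[
  \tfrac{1}{2}\bigl(s^a+(1-s)^a\bigr) \leq \Bigl(\tfrac{s+(1-s)}{2}\Bigr)^a = 2^{-a},
\]
so that \(2^{a-1}g(s)\leq 1\); the same inequality reversed (convexity) for \(a\geq 1\) gives \(2^{a-1}g(s)\geq 1\). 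Multiplying by \((x+y)^a\) recovers the four claimed bounds.

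There is essentially no obstacle: the argument is a one-line reduction to scalars plus Jensen's inequality at the midpoint. The only point requiring (minor) care is to check the boundary cases \(a\in\{0,1\}\), where the inequalities degenerate to equalities, and the trivial case \(x+y=0\) handled at the outset.
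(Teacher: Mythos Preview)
Your proof is correct and follows essentially the same approach as the paper: both use homogeneity to reduce to the case \(x+y=1\) (your parametrization by \(s=x/(x+y)\) is the same move), then apply the scalar bounds \(t^a\gtrless t\) on \([0,1]\) for the sub-/super-additive halves and Jensen's inequality (concavity/convexity of \(t\mapsto t^a\) at the midpoint) for the other halves.
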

\begin{proof}
  If \(a \in [0,1]\), the function \(x \mapsto x^a\) is concave on
  \(\erre_+\), i.e.
  \[
    \frac{1}{2^a} (x+y)^a = \Bigl( \frac12 x + \frac12 y \Bigr)^a
    \geq \frac12 (x^a + y^a),
  \]
  which proves the lower bound. To prove the upper bound, let us
  assume that \(x+y=1\). This comes at no loss of generality by
  homogeneity. Then it is enough to note that \(x \leq x^a\) and
  \(y \leq y^a\).
  If \(a \in [1,+\infty\mathclose[\), the function \(x \mapsto x^a\)
  is convex on \(\erre_+\), from which the upper bound
  follows. Moreover, assuming that \(x+y=1\) by homogeneity, one has
  \(x \geq x^a\) and \(y \geq y^a\).
\end{proof}

For \(q \in [1,\infty\mathclose[\), let the function
\(j_q\colon \erre \to \erre\) be defined by
\begin{equation}
  \label{eq:jq}
  j_q\colon x \mapsto \abs{x}^{q-1}\operatorname{sgn}(x)
  = \abs{x}^{q-2} x.
\end{equation}
\begin{lemma}
  \label{lm:j}
  For any \(x,y \in \erre\) and \(q \in [1,2]\) one
  has
  \[
    \abs[\big]{j_q(x) - j_q(y)} \lesssim_q \abs{x-y}^{q-1}.
  \]
\end{lemma}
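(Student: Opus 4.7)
The plan is to exploit the oddness of \(j_q\) and reduce to two cases according to whether \(x\) and \(y\) have the same sign or opposite signs, in each case applying Lemma \ref{lm:xya} with exponent \(a = q-1 \in [0,1]\).

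First, observe that \(j_q\) is odd, so by symmetry (replacing \((x,y)\) with \((-x,-y)\) if necessary) we may reduce to the case where the larger of \(\abs{x},\abs{y}\) is attained by a nonnegative number; and of course we may assume \(x \neq y\).

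In the same-sign case, i.e.\ when \(xy \geq 0\), I would use oddness again to assume \(x \geq y \geq 0\). Then \(\abs[\big]{j_q(x) - j_q(y)} = x^{q-1} - y^{q-1}\), and the upper bound in Lemma \ref{lm:xya} applied to \(a = q-1 \in [0,1]\) and the decomposition \(x = (x-y) + y\) yields
\[
  x^{q-1} = \bigl( (x-y) + y \bigr)^{q-1} \leq (x-y)^{q-1} + y^{q-1},
\]
so \(\abs[\big]{j_q(x) - j_q(y)} \leq (x-y)^{q-1} = \abs{x-y}^{q-1}\), with constant \(1\).

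In the opposite-sign case, WLOG \(x > 0 > y\), so that \(j_q(x) = x^{q-1}\) and \(j_q(y) = -\abs{y}^{q-1}\); hence \(\abs[\big]{j_q(x) - j_q(y)} = x^{q-1} + \abs{y}^{q-1}\) while \(\abs{x-y} = x + \abs{y}\). The lower bound in Lemma \ref{lm:xya}, rearranged, gives \(x^{q-1} + \abs{y}^{q-1} \leq 2^{1-(q-1)} (x+\abs{y})^{q-1} = 2^{2-q} \abs{x-y}^{q-1}\).

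Combining the two cases yields the inequality with implicit constant \(2^{2-q} \leq 2\), which is in fact uniform in \(q \in [1,2]\). There is no real obstacle here beyond the bookkeeping of signs; the slightly delicate point is to remember that in the opposite-sign case the two ``pieces'' \(x^{q-1}\) and \(\abs{y}^{q-1}\) add together rather than subtract, which is exactly what makes the lower bound (and not the upper bound) of Lemma \ref{lm:xya} the relevant one.
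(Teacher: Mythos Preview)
Your proof is correct and follows essentially the same approach as the paper: split into the same-sign and opposite-sign cases, and in the latter apply the lower bound of Lemma~\ref{lm:xya} with exponent \(q-1\). The only cosmetic difference is that for the same-sign case the paper invokes the \((q-1)\)-H\"older continuity of \(x \mapsto x^{q-1}\) on \(\erre_+\) directly, whereas you derive the same inequality from the upper bound in Lemma~\ref{lm:xya}; these are equivalent formulations of the same fact.
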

\begin{proof}
  Recall that the function \(x \mapsto x^a\), with \(a \in [0,1]\), is
  \(a\)-H\"older continuous on \(\erre_+\) with constant equal to one.
  If \(x,y \in \erre\) have the same sign, then
  \[
    \abs[\big]{j_q(x) - j_q(y)} = \abs[\big]{\abs{x}^{q-1} -
      \abs{y}^{q-1}} \leq \abs[\big]{\abs{x} - \abs{y}}^{q-1}
    \leq \abs[\big]{x-y}^{q-1}.
  \]
  To consider the case where \(x\) and \(y\) have opposite signs, let
  us assume, without loss of generality, that \(x<0<y\). Then, by
  Lemma \ref{lm:xya},
  \[
    \abs[\big]{j_q(x) - j_q(y)} = y^{q-1} + \abs{x}^{q-1}
    \lesssim_q \bigl( y + \abs{x} \bigr)^{q-1} = \abs{x-y}^{q-1}.
    \qedhere
  \]
\end{proof}

\subsection{Duality mappings}
Throughout this subsection \(E\) is a Banach space.
The duality mapping of \(E\) is the set \(J \subset E \times E'\)
defined by
\[
  J(x) := \bigl\{y \in E':\, \ip{y}{x} = \norm{x}^2 = \norm{y}^2
  \bigr\}.
\]
The duality mapping \(J\) is the subdifferential (in the sense of
convex analysis) of the convex lower semicontinuous function
\(x \mapsto \norm{x}^2/2\), hence \(J\) is a maximal monotone subset
of \(E \times E'\). Moreover, if \(E'\) is strictly convex, then \(J\)
is the graph of a function, i.e. \(J(x)\) is a singleton for every
\(x \in E\), and \(J\colon E \to E'_s\) is continuous.

Let us also introduce, for any \(q \in [1,\infty\mathclose[\), the
set \(J_q \subset E \times E'\) defined by
\[
  J_q(x) := \bigl\{y \in E':\, \ip{y}{x} = \norm{x}^q = \norm{y}^q
  \bigr\}.
\]
Then, as is easily seen, \(J_q(x) = \norm{x}^{q-2} J(x)\) for every
\(x \neq 0\), and \(J_q(0)=0\).
By a theorem of Asplund (see \cite{Asp:dm}), one has
\[
\partial \norm{\cdot}^q = q J_q \qquad \text{on } E \setminus \{0\},
\]
where \(\partial\) stands for the subdifferential.
If \(E=L^q\), with \(q \in \mathopen]1,\infty\mathclose[\), then
\(J_q\colon \phi \mapsto \abs{\phi}^{q-1} \operatorname{sgn}(\phi)\),
that is \(J_q\) is the superposition operator associated to the
function \(j_q\) defined in \eqref{eq:jq}, i.e.
\(J_q\colon \phi \mapsto j_q \circ \phi\).

\subsection{Estimates for mild solutions}
Let us recall a fundamental estimate of the Crandall-Liggett theory of
mild solutions to equations of the type
\[
u' + Cu = F, \qquad u(0)=u_0,
\]
where \(C\) is a (possibly nonlinear and multivalued) \(m\)-accretive
operator on a Banach space \(E\) and \(F \in L^1(0,T;E)\).
To this purpose, let us introduce the so-called bracket
\([\cdot,\cdot]\colon E \times E \to \erre\) defined by
\[
[x,y] := \max_{x^\ast \in J_1(x)} \ip{x^*}{y}.
\]
The following characterization of accretivity in terms of the bracket
is particularly effective.
\begin{lemma}
  A subset \(C\) of \(E \times E\) is accretive if and only if
  \[
    [x-y,Cx-Cy] \geq 0 \qquad \forall x,y \in \dom(C).
  \]
\end{lemma}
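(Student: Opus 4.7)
The plan is to encode accretivity in the convex scalar function $\phi(\lambda) := \norm{z + \lambda w}$ on $\erre_+$, where $z := x-y$ and, for a multivalued $C$, $w$ stands for $u-v$ with an arbitrary selection $(x,u),(y,v) \in C$. The case $z = 0$ reduces both the accretivity inequality and the condition $[0,w] \geq 0$ to trivialities, so I will assume throughout that $z \neq 0$. By the very definition of accretivity, $C$ being accretive is equivalent to $\phi(\lambda) \geq \phi(0)$ for every $\lambda > 0$ and every admissible selection.

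Since $\phi$ is convex (being the composition of an affine map with the norm), the difference quotients $\lambda^{-1}(\phi(\lambda) - \phi(0))$ are non-decreasing in $\lambda > 0$, and their infimum equals the right derivative $\phi'_+(0)$. Hence accretivity is equivalent to $\phi'_+(0) \geq 0$, and the whole proof reduces to the identification $\phi'_+(0) = [z,w]$. For this I would invoke the Asplund formula recalled earlier with $q = 1$, giving $\partial \norm{\cdot}(z) = J_1(z)$, combined with the classical fact that the right directional derivative of a continuous convex function at a point equals the support function of its subdifferential evaluated in the direction. These two ingredients together yield
\[
  \phi'_+(0) \;=\; \max_{x^* \in J_1(z)} \ip{x^*}{w} \;=\; [z,w],
\]
the maximum being attained because $J_1(z)$---the intersection of the closed unit ball of $E'$ with the weak-$*$ closed affine hyperplane $\{x^* \in E':\ \ip{x^*}{z} = \norm{z}\}$---is weak-$*$ compact by Banach--Alaoglu.

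I expect the identification of $\phi'_+(0)$ with the support function of the subdifferential to be the only slightly delicate point. A self-contained alternative that bypasses subdifferential calculus uses Hahn--Banach to pick, for each $\lambda > 0$, a unit functional $x^*_\lambda$ attaining $\norm{z + \lambda w}$, then extracts a weak-$*$ cluster point $x^* \in J_1(z)$ as $\lambda \to 0^+$ on which the accretivity inequality passes to $\ip{x^*}{w} \geq 0$; conversely, if such an $x^* \in J_1(z)$ exists, the estimate $\norm{z+\lambda w} \geq \ip{x^*}{z+\lambda w} = \norm{z} + \lambda \ip{x^*}{w}$ delivers accretivity directly.
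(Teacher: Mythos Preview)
Your argument is correct. The paper itself does not supply a proof of this lemma: immediately after stating it (and Proposition~\ref{prop:CL}) the author writes ``Proofs of these facts and further details can be found in, e.g., \cite[\S2.3 and pp.~202-ff.]{barbu}'', so there is no in-paper proof to compare against.

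What you wrote is in fact the classical route (essentially Kato's characterization of accretivity): encode the resolvent-type inequality \(\norm{z} \leq \norm{z+\lambda w}\) as nonnegativity of the right derivative at \(0\) of the convex function \(\lambda \mapsto \norm{z+\lambda w}\), and identify that right derivative with the support function of \(\partial\norm{\cdot}(z) = J_1(z)\), which is precisely the bracket \([z,w]\). Your handling of the degenerate case \(z=0\) matches the paper's convention \(J_1(0)=\{0\}\), so \([0,w]=0\) and both sides are vacuous. The alternative Hahn--Banach\,/\,weak-\(\ast\) cluster-point argument you sketch is exactly the version found in most textbook treatments (including Barbu's), so either variant would be acceptable here.
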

\noindent The above-mentioned estimate can now be formulated.
\begin{prop}
  \label{prop:CL}
  Let \(C\) be an \(m\)-accretive subset of \(E \times E\) and
  \(F \in L^1(0,T;E)\). Assume that \(u^i\), \(i=1,2\) are mild
  solutions to
  \[
    (u^i)' + Cu^i = F^i, \qquad u^i(0) = u^i_0, \qquad i=1,2,
  \]
  where \(u^1_0,u^2_0\) belong to the closure of \(\dom(C)\). Then
  \[
    \norm{u^1 - u^2} \leq \norm{u^1_0-u^2_0} + \int_0^\cdot
    \bigl[u^1-u^2,F^1-F^2].
  \]
\end{prop}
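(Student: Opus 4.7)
The plan is to follow the classical Crandall--Liggett--Bénilan approach: establish the estimate for the piecewise-constant DS-approximations that define mild solutions, then pass to the limit \(\varepsilon \downarrow 0\).

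By definition of mild solution, for each \(\varepsilon > 0\) there exist partitions of \([0,T]\) with mesh below \(\varepsilon\) (taken common to both solutions after joint refinement), initial points \(u^{i,\varepsilon}_0\) with \(\norm{u^{i,\varepsilon}_0 - u^i_0} < \varepsilon\), and forcing vectors \(g^i_k \in E\) approximating \(F^i\) in the sense \(\sum_k \int_{t_{k-1}}^{t_k} \norm{F^i(s) - g^i_k}\,ds < \varepsilon\), such that the implicit Euler iterates
\[
  \frac{u^{i,\varepsilon}_k - u^{i,\varepsilon}_{k-1}}{h_k} + C u^{i,\varepsilon}_k \ni g^i_k, \qquad h_k := t_k - t_{k-1},
\]
have piecewise-constant extensions \(\bar u^{i,\varepsilon}\) converging uniformly on \([0,T]\) to \(u^i\).

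The heart of the proof is the one-step discrete comparison
\[
  \norm{u^{1,\varepsilon}_k - u^{2,\varepsilon}_k} \leq \norm{u^{1,\varepsilon}_{k-1} - u^{2,\varepsilon}_{k-1}} + h_k \bigl[u^{1,\varepsilon}_k - u^{2,\varepsilon}_k,\, g^1_k - g^2_k\bigr].
\]
Subtracting the two discrete identities gives
\((u^{1,\varepsilon}_k - u^{2,\varepsilon}_k) - (u^{1,\varepsilon}_{k-1} - u^{2,\varepsilon}_{k-1}) + h_k(c^1_k - c^2_k) = h_k(g^1_k - g^2_k)\) for suitable \(c^i_k \in Cu^{i,\varepsilon}_k\). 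The preceding accretivity lemma furnishes some \(y^* \in J_1(u^{1,\varepsilon}_k - u^{2,\varepsilon}_k)\) with \(\ip{y^*}{c^1_k - c^2_k} \geq 0\). Pairing the subtracted identity with this specific \(y^*\), using that \(\ip{y^*}{\cdot}\) attains the norm at \(u^{1,\varepsilon}_k - u^{2,\varepsilon}_k\) and is bounded via Cauchy--Schwarz at \(u^{1,\varepsilon}_{k-1} - u^{2,\varepsilon}_{k-1}\), dropping the nonnegative \(c\)-contribution, and noting that \(\ip{y^*}{g^1_k - g^2_k} \leq [u^{1,\varepsilon}_k - u^{2,\varepsilon}_k,\, g^1_k - g^2_k]\) by definition of the bracket, yields the claim.

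Iterating the one-step inequality in \(k\) and rewriting the telescoping sum as an integral of piecewise-constant functions produces the discrete analogue of the target estimate for \(\bar u^{i,\varepsilon}\). The conclusion then follows by letting \(\varepsilon \downarrow 0\): the initial term converges to \(\norm{u^1_0 - u^2_0}\), \(\bar u^{i,\varepsilon} \to u^i\) uniformly on \([0,T]\), and \(\bar g^i \to F^i\) in \(L^1(0,T;E)\). The principal technical obstacle is the limit passage in the bracket integrand, since \((x,y) \mapsto [x,y]\) is in general only upper semicontinuous in \(x\) for fixed \(y\) (being a supremum of linear functionals over the graph-closed set-valued map \(J_1\)); a Fatou-type argument, exploiting the uniform convergence \(\bar u^{i,\varepsilon} \to u^i\) and the \(L^1\) approximation of \(F^i\) by \(\bar g^i\), is what yields \(\limsup_\varepsilon \int_0^t [\bar u^{1,\varepsilon} - \bar u^{2,\varepsilon},\, \bar g^1 - \bar g^2]\,ds \leq \int_0^t [u^1 - u^2,\, F^1 - F^2]\,ds\), from which the proposition follows.
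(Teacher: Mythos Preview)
The paper does not prove this proposition: immediately after its statement it writes ``Proofs of these facts and further details can be found in, e.g., [\S2.3 and pp.~202-ff.]{barbu}'', treating it as a known result from the Crandall--Liggett theory. Your sketch is precisely the standard argument one finds in that reference (implicit-Euler DS-approximants, one-step accretivity estimate via a norming functional, telescoping, then passage to the limit using upper semicontinuity of the bracket), so there is nothing to compare --- you have reconstructed the classical proof the paper is citing.
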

\noindent Proofs of these facts and further details can be found in,
e.g., \cite[\S2.3 and pp.~202-ff.]{barbu}

\medskip

In the linear case, without invoking the Crandall-Liggett theory,
similar estimates can be obtained, that in some situations turn out to
be more useful for our purposes. To this aim, we shall need the
following differentiability result, a proof of which is included for
completeness. We shall denote the left and right weak derivatives by
the symbols \(D^-_\sigma\) and \(D^+_\sigma\), respectively.
\begin{prop}
  \label{prop:D}
  Let \(q \in [1,\infty\mathclose[\), \(I \subseteq \erre\) be an open
  interval, \(t \in I\), and \(g\colon I \to E\). If \(g\) is weakly
  right-differentiable at \(t\), then
  \(\norm{g}^q\colon I \to \erre_+\) satisfies
  \[
    D_+ \norm{g(t)}^q \geq q\ip[\big]{y}{D_\sigma^+g(t)} \qquad
    \forall y \in J_q(g(t)).
  \]
  If \(g\) is weakly left-differentiable at \(t\), then
  \[
    D^- \norm{g(t)}^q \leq q\ip[\big]{y}{D_\sigma^-g(t)} \qquad
    \forall y \in J_q(g(t)).
  \]
  In particular, if \(g\) is weakly differentiable at \(t\) and
  \(\norm{g(\cdot)}^q\) is differentiable at \(t\), then
  \[
    D\norm{g(t)}^q =  q\ip[\big]{y}{D_\sigma g(t)} \qquad
    \forall y \in J_q(g(t)).
  \]
\end{prop}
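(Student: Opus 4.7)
The backbone is Asplund's theorem stated just above, which identifies $qJ_q$ with $\partial\norm{\cdot}^q$ on $E \setminus \{0\}$; combined with the definitions of the weak one-sided derivatives and of the Dini derivatives, it yields the three estimates almost directly.

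Fix $t \in I$ and $y \in J_q(g(t))$, and first suppose $g(t) \neq 0$. By Asplund's identification, $qy \in \partial\norm{\cdot}^q(g(t))$, so the subgradient inequality gives
\[
\norm{z}^q \geq \norm{g(t)}^q + q\ip{y}{z - g(t)} \qquad \forall z \in E.
\]
Substituting $z = g(s)$ and dividing by $s - t$, for $s > t$ the inequality is preserved,
\[
\frac{\norm{g(s)}^q - \norm{g(t)}^q}{s - t} \geq q\ip[\bigg]{y}{\frac{g(s) - g(t)}{s - t}},
\]
whereas for $s < t$ the division by $s - t < 0$ reverses it. By the definition of weak right-derivative, the pairing $\ip{y}{(g(s)-g(t))/(s-t)}$ converges to $\ip{y}{D_\sigma^+ g(t)}$ as $s \to t+$, so passing to $\liminf$ on the left of the first displayed inequality gives the claimed bound on $D_+\norm{g(t)}^q$. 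The symmetric argument---passing to $\limsup$ on the left of the reversed inequality as $s \to t-$---yields the bound on $D^-\norm{g(t)}^q$.

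The edge case $g(t) = 0$ is immediate: $J_q(0) = \{0\}$ forces $y = 0$, and both asserted bounds reduce to the triviality that $\norm{g(\cdot)}^q$ attains a global minimum at $t$. For the last assertion, weak differentiability of $g$ at $t$ gives $D_\sigma^+ g(t) = D_\sigma^- g(t) = D_\sigma g(t)$, while differentiability of $\norm{g(\cdot)}^q$ at $t$ collapses all four Dini derivatives to $D\norm{g(t)}^q$; the two one-sided inequalities already established therefore pinch to an equality.

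The main content is really just Asplund's formula. The only mild points of care are the sign flip when dividing by $s - t < 0$---which is precisely what pairs the \emph{lower} right Dini derivative with $\liminf_{s \to t+}$ and the \emph{upper} left Dini derivative with $\limsup_{s \to t-}$---and the separate treatment of $g(t) = 0$, where Asplund's formula does not apply directly but the conclusion is trivial.
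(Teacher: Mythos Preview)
Your proof is correct and follows essentially the same route as the paper's: the subgradient inequality for \(\norm{\cdot}^q\) at \(g(t)\), division by the increment, and passage to the appropriate one-sided limit. The paper phrases the subgradient step as ``\(J_q\) is the subdifferential of \(\norm{\cdot}^q/q\)'' and does not separate out the case \(g(t)=0\), but your explicit treatment of that edge case is a harmless (and arguably cleaner) addition, since \(J_q(0)=\{0\}\subset\partial(\norm{\cdot}^q/q)(0)\) anyway.
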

\begin{proof}
  As \(J_q\) is the subdifferential of \(\norm{\cdot}^q/q\), one has,
  for any \(x,\,k \in E\) and any \(z \in J_q(x)\),
  \[
    \norm{x+k}^q - \norm{x}^q \geq q\ip{z}{k}.
  \]
  Let \(h \in \erre_+^\times\) be such that \(t+h \in I\). Taking
  \[
    x := g(t), \qquad k := g(t+h)-g(t), \qquad z := y \in J(g(t))
  \]
  yields
  \[
    \frac{\norm{g(t+h)}^q - \norm{g(t)}^q}{h} \geq
    q\ip[\bigg]{y}{\frac{g(t+h)-g(t)}{h}},
  \]
  hence
  \[
    D_+ \norm{g(t)}^q
    = \liminf_{h \to 0+} \frac{\norm{g(t+h)}^q - \norm{g(t)}^q}{h}
    \geq q\ip[\big]{y}{D^+_\sigma g(t)}.
  \]
  The case of the limit from the left is entirely analogous: one gets
  \[
    D^- \norm{g(t)}^q
    = \limsup_{h \to 0-} \frac{\norm{g(t+h)}^q - \norm{g(t)}^q}{h} \leq
    q\ip[\big]{y}{D^-_\sigma g(t)}.
  \]
  If \(g\) and \(\norm{g(\cdot)}^q\) are weakly differentiable and
  differentiable, respectively, at \(t\), the claim follows by
  comparison.
\end{proof}

Let \(S\) be a strongly continuous contraction semigroup of linear
operators on \(E\) with negative generator \(A\), that, as is well
known, is necessarily \(m\)-accretive.
\begin{prop}
  \label{prop:stm}
  Let \(q \in [1,\infty\mathclose[\), \(E'\) strictly convex,
  \(F \in L^1(0,T;E)\), \(v_0 \in E\), and \(v \in C([0,T];E)\) be
  defined by
  \[
    v(t) = S(t)v_0 + \int_0^t S(t-s)F(s)\,ds.
  \]
  Then \(\zeta_q := J_q(v)\) belongs to \(C([0,T];E'_s)\) and
  \[
    \norm{v}^q \leq \norm{v_0}^q
    + q \int_0^\cdot \ip{F(s)}{\zeta_q(s))}\,ds.
  \]
\end{prop}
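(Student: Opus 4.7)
The continuity claim is immediate: strict convexity of \(E'\) makes \(J_q\colon E \to E'_s\) single-valued and continuous, as recalled earlier, and \(v \in C([0,T];E)\), so \(\zeta_q = J_q \circ v\) belongs to \(C([0,T];E'_s)\) by composition.

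For the integral estimate the plan is to establish it first for smooth data and then pass to the limit. Assume \(v_0 \in \dom(A)\) and \(F \in C^1([0,T];E)\); a standard result then identifies \(v\) with a classical solution, \(v \in C([0,T];\dom(A)) \cap C^1([0,T];E)\) with \(v'(t) = F(t) - Av(t)\). Since \(v\) is Lipschitz as a map into \(E\), the scalar function \(t \mapsto \norm{v(t)}^q\) is Lipschitz, hence absolutely continuous, hence differentiable almost everywhere. At any such differentiability point Proposition \ref{prop:D} yields
\[
  D\norm{v(t)}^q = q\ip{\zeta_q(t)}{v'(t)} = q\ip{F(t)}{\zeta_q(t)} - q\ip{\zeta_q(t)}{Av(t)}.
\]
Strict convexity of \(E'\) makes \(J\) (and therefore \(J_q\)) single-valued, so accretivity of \(A\) forces \(\ip{\zeta_q(t)}{Av(t)} = \norm{v(t)}^{q-2}\ip{J(v(t))}{Av(t)} \geq 0\) when \(v(t) \neq 0\), the case \(v(t)=0\) being trivial since then \(\zeta_q(t)=0\) as well. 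Integrating the resulting pointwise inequality \(D\norm{v(t)}^q \leq q\ip{F(t)}{\zeta_q(t)}\) on \([0,t]\) proves the estimate for smooth data.

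To treat general \(v_0 \in E\) and \(F \in L^1(0,T;E)\), I would approximate \(v_0\) by \(v_0^n \in \dom(A)\) and \(F\) by \(F^n \in C^1([0,T];E)\) in their respective norms. The associated mild solutions \(v^n\) are classical by the preceding step and satisfy \(\norm{v^n - v}_{C([0,T];E)} \leq \norm{v_0^n - v_0} + \norm{F^n - F}_{L^1(0,T;E)} \to 0\) by contractivity of \(S\). Setting \(\zeta_q^n := J_q(v^n)\), continuity of \(J_q\) into \(E'_s\) yields pointwise weak convergence \(\zeta_q^n(s) \to \zeta_q(s)\) in \(E'\), together with the uniform bound \(\norm{\zeta_q^n(s)}_{E'} = \norm{v^n(s)}^{q-1}\). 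Dominated convergence then gives \(\int_0^t \ip{F(s)}{\zeta_q^n(s)}\,ds \to \int_0^t \ip{F(s)}{\zeta_q(s)}\,ds\), while \(\abs[\big]{\int_0^t \ip{F^n(s)-F(s)}{\zeta_q^n(s)}\,ds} \lesssim \norm{F^n - F}_{L^1(0,T;E)} \to 0\), so the inequality passes to the limit. The main technical point is precisely this last passage, which is enabled by the \(E'_s\)-continuity of \(J_q\) granted by strict convexity of \(E'\) combined with the uniform boundedness of \(\{v^n\}\); without single-valuedness of \(J_q\) one could not simultaneously identify a weak limit for \(\zeta_q^n\) and evaluate the bracket against \(F\) in a stable way.
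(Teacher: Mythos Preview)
Your proof is correct and follows essentially the same strategy as the paper: establish the estimate for regular data via Proposition~\ref{prop:D} and accretivity of \(A\), then pass to the limit using the \(E'_s\)-continuity of \(J_q\) and dominated convergence. The only difference is the regularization device---you smooth \(F\) in time and pick \(v_0^n \in \dom(A)\) by density, whereas the paper applies the resolvent \((I+\varepsilon A)^{-1}\) to both \(v_0\) and \(F\), which maps into \(\dom(A)\) (so that \(v^\varepsilon\) is a strong solution) and, being a contraction, furnishes the uniform bound needed for domination in one stroke.
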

\begin{proof}
  Let us first assume that \(v_0 \in \dom(A)\) and
  \(F \in L^1(0,T;\dom(A))\). Then \(v\) is a strong solution,
  i.e. \(v\) is differentiable a.e. on \([0,T]\) and satisfies
  \begin{equation}
    \label{eq:vF}
    v^\prime + Av = F \qquad \text{a.e.},
  \end{equation}
  thus also
  \[
    v = v_0 - \int_0^\cdot Av(s)\,ds + \int_0^\cdot F(s)\,ds,
  \]
  from which it immediately follows that \(v\) is Lipschitz continuous
  (with constant depending on the \(L^1(0,T;\dom(A))\) norms of \(v\)
  and \(F\)). Therefore \(\norm{v(\cdot)}\) is differentiable almost
  everywhere by Rademacher's theorem, and the same holds for
  \(\norm{v(\cdot)}^q\) by the chain rule.
  Since \(E'\) is strictly convex, the duality map \(J_q\) is
  single-valued, hence, setting \(\zeta_q := J_q(v)\) and dualizing
  \eqref{eq:vF} with \(\zeta_q\), one has, by
  Proposition~\ref{prop:D},
  \[
    \frac1q \bigl( \norm{v}^q \bigr)' + \ip{Av}{\zeta_q} = \ip{F}{\zeta_q}
    \qquad \text{a.e.},
  \]
  where, by the accretivity of \(A\), \(\ip{Av}{\zeta_q} \geq
  0\). Moreover, recalling that the strict convexity of \(E'\) also
  implies that \(J_q\colon E \to E'_s\) is continuous, one has, by
  composition, that \(\zeta_q \in C([0,T];E'_s)\).  As the duality
  form \(\ip{\cdot}{\cdot}\colon E \times E'_s \to \erre\) is
  continuous, hence measurable, it follows, by composition, that
  \(\ip{F}{\zeta_q}\colon [0,T] \to \erre\) is measurable. Therefore,
  integrating,
  \[
    \norm{v}^q \leq \norm{v_0}^q + q\int_0^\cdot
    \ip{F(s)}{\zeta_q(s))}\,ds.
  \]
  If \(v_0\) and \(F\) take values in \(E\), but not necessarily in
  \(\dom(A)\), \(v\) does not satisfy \eqref{eq:vF} in general. In
  this case, one can proceed by a regularization step via the
  resolvent of \(A\).  For any \(\varepsilon \in \erre_+^\times\) and
  any \(E\)-valued element \(h\), let us use the notation
  \(h^\varepsilon := (I+\varepsilon A)^{-1}h\).  One has
  \(\ep{v} = S(t)\ep{v}_0 + S \ast \ep{F}\), where
  \(v_0^\varepsilon \in \dom(A)\) and \(\ep{F} \in L^1(0,T;\dom(A))\),
  hence
  \[
    (v^\varepsilon)^\prime + Av^\varepsilon = F^\varepsilon,
    \qquad v^\varepsilon(0) = v_0^\varepsilon,
  \]
  in the strong sense, which implies
  \[
    \norm{v^\varepsilon}^q \leq \norm{v_0^\varepsilon}^q +
    q\int_0^\cdot \ip[\big]{F^\varepsilon(s)}{\zeta^\varepsilon(s))}\,ds,
  \]
  where \(\zeta^\varepsilon = J_q(v^\varepsilon)\).  Let us now pass
  to the limit as \(\varepsilon \to 0\): by well-known properties of
  the resolvent, \(v_0^\varepsilon\) converges strongly to \(v_0\),
  \(F^\varepsilon\) converges strongly to \(F\) a.e. on \([0,T]\), and
  \(v^\varepsilon\) converges strongly to \(v\) pointwise. Therefore
  \(J_q(v^\varepsilon)\) converges to \(J_q(v)\) pointwise in the
  \(\sigma(E',E)\) topology, hence
  \(\ip{F^\varepsilon}{\zeta^\varepsilon}\) converges pointwise to
  \(\ip{F}{\zeta}\). Moreover, by the contractivity of the resolvent,
  \[
    \abs[\big]{\ip[\big]{F^\varepsilon(s)}{\zeta^\varepsilon(s)}}
    \leq \norm[\big]{F^\varepsilon(s)} \,
    \norm[\big]{J(v^\varepsilon(s)}
    \leq \norm[\big]{F(s)} \,
    \norm[\big]{v(s)},
  \]
  where \(s \mapsto \norm{F(s)} \norm{v(s)} \in L^1(0,T)\) because
  \(F \in L^1(0,T;E)\) and \(v \in C([0,T];E)\). The dominated
  convergence theorem then yields
  \[
    \lim_{\varepsilon \to 0}
    \int_0^t \ip{F^\varepsilon(s)}{\zeta^\varepsilon(s)}\,ds
    = \int_0^t \ip{F(s)}{\zeta(s)}\,ds
  \]
  for every \(t \in \erre_+\).
\end{proof}

\subsection{A null sequence}
For the purposes of this subsection, let \((X,\cA,\mu)\) be a finite
measure space, denote \(L^1(X,\cA,\mu)\) simply by \(L^1\), and
analogously for \(L^\infty\). Moreover, let \(\ip{\cdot}{\cdot}\)
stand for the duality between \(L^1\) and \(L^\infty\). The following
lemma is needed in the proof of Proposition \ref{prop:C1}.
\begin{lemma}
  \label{lm:EIconv}
  Let \({(f_n,g_n\colon X \to \erre)}_{n\in\enne}\) be sequences of
  measurable functions. If \((f_n)\) is equiintegrable and \((g_n)\)
  is bounded in \(L^\infty\) and converges to zero in measure, then
  \(\lim_{n\to\infty} \ip{f_n}{g_n} = 0\).
\end{lemma}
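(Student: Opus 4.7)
The plan is to use a Vitali-type splitting argument. For any $\varepsilon>0$, I would split the integral $\ip{f_n}{g_n}=\int_X f_n g_n\,d\mu$ at a threshold $\eta>0$ on $\abs{g_n}$: on the set $\{\abs{g_n}\leq\eta\}$ one controls $\abs{g_n}$ by $\eta$ and invokes a uniform $L^1$-bound on $(f_n)$, while on $\{\abs{g_n}>\eta\}$, which has small measure because $g_n\to 0$ in measure, one exploits equiintegrability of $(f_n)$ together with the uniform $L^\infty$-bound on $(g_n)$.

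Concretely, I would set $M:=\sup_n\norm{g_n}_{L^\infty}<\infty$ and $C:=\sup_n\norm{f_n}_{L^1}<\infty$, noting that equiintegrability on a finite measure space automatically yields $L^1$-boundedness. Given $\varepsilon>0$, I would first pick $\eta>0$ with $\eta C<\varepsilon/2$, then apply equiintegrability to obtain $\delta>0$ such that $\sup_n\int_A\abs{f_n}\,d\mu<\varepsilon/(2M)$ whenever $\mu(A)<\delta$. Since $g_n\to 0$ in measure, the sets $A_n:=\{\abs{g_n}>\eta\}$ satisfy $\mu(A_n)<\delta$ for all sufficiently large $n$, and in that regime
\[
\abs{\ip{f_n}{g_n}} \leq M\int_{A_n}\abs{f_n}\,d\mu + \eta\int_{X\setminus A_n}\abs{f_n}\,d\mu < \frac{\varepsilon}{2}+\frac{\varepsilon}{2}=\varepsilon,
\]
so the arbitrariness of $\varepsilon$ gives the claim.

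I do not anticipate any real obstacle: this is the classical pairing, underlying the Dunford--Pettis criterion for weak sequential compactness in $L^1$, between an equiintegrable sequence and a bounded sequence converging in measure. The only steps requiring care are the order of quantifiers --- $\eta$ must be chosen after $\varepsilon$ but before $\delta$ --- and the initial observation that equiintegrability combined with $\mu(X)<\infty$ gives the uniform $L^1$-bound $C<\infty$.
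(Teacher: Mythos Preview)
Your proof is correct and in fact more direct than the paper's. The paper argues by sub-subsequences: it passes from convergence in measure to a.e.\ convergence along a subsequence, then invokes the Severini--Egorov theorem to upgrade this to almost uniform convergence, and splits the integral over the Egorov set and its complement. You instead split at a level set \(\{\abs{g_n}>\eta\}\) and use the definition of convergence in measure directly, which avoids both the subsequence extraction and the appeal to Egorov. Your route is shorter and more elementary; the paper's route has the minor advantage that once one is on the a.e.-convergent subsequence, the splitting set \(A\) is independent of \(n\), but this buys nothing here. One small point worth noting explicitly in a write-up: the case \(M=0\) is trivial and should be set aside before dividing by \(M\); the paper's proof has the same tacit assumption.
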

\begin{proof}
  Let \(a_n := \ip{f_n}{g_n}\) and
  \(M := \sup_n \norm{f_n}_{L^1} + \sup_n \norm{g_n}_{L^\infty}\). We
  are going to show that every subsequence \((a_{n'})\) admits a
  further subsequence \((a_{n''})\) converging to zero. As is well
  known, this will imply that \((a_n)\) converges to zero. Let then
  \((a_{n'})\) be a subsequence of \((a_n)\). The sequence \(g_{n'}\)
  admits a subsequence \(g_{n''}\) converging to zero almost
  everywhere. Let \(\varepsilon \in \erre_+^\times\). The sequence
  \((f_{n''})\) is equiintegrable, hence there exists
  \(\delta = \delta(\varepsilon) \in \erre_+^\times\) such that
  \[
    \ip{\abs{f}}{1_B} < \frac{\varepsilon}{2M} \qquad \forall f \in (f_{n''})
  \]
  for every \(B \in \cA\) with \(\mu(B)<\delta\).  By the
  Severini-Egorov theorem, \((g_{n''})\) converges to zero almost
  uniformly. That is, there exists a set \(A \in \cA\) with
  \(\mu(A^\complement) < \delta\) such that \((g_{n''})\) converges
  uniformly to zero on \(A\). Then, writing \(k\) in place of \(n''\)
  for simplicity,
  \begin{align*}
    \abs[\big]{\ip{f_k}{g_k}}
    &\leq \abs[\big]{\ip{f_k}{g_k1_A}}
      + \abs[\big]{\ip{f_k}{g_k1_{A^\complement}}}\\
    &\leq \norm[\big]{f_k}_{L^1} \norm[\big]{g_k}_{L^\infty(A)}
      + \norm[\big]{g_k}_{L^\infty} \ip{\abs{f_k}}{1_{A^\complement}}\\
    &\leq M \norm[\big]{g_k}_{L^\infty(A)} + \varepsilon/2.
  \end{align*}
  Since \(g_k\) converges to zero uniformly on \(A\) as
  \(k \to \infty\), there exists \(k_0 \in \enne\) such that
  \(\norm{g_k}_{L^\infty(A)} < \varepsilon/(2M)\) for every
  \(k>k_0\). This proves that \((a_{n''})\) converges to zero, which
  in turn establishes the claim.
\end{proof}

\section{Existence and uniqueness of solutions}
\label{sec:wp}
The following assumptions and conventions are assumed to hold
throughout. Let \(G\) be a bounded domain of \(\erre^n\). For any
\(q \in [1,+\infty\mathclose[\), we shall write \(L^q\) to mean
\(L^q(G)\).
Let \(A\) be a linear (unbounded) \(m\)-accretive
operator on \(L^1\), that can be restricted to an operator of the same
class on every \(L^q\), \(q \in \mathopen]1,+\infty\mathclose[\). We
shall not notationally distinguish between realizations of \(A\) on
different \(L^q\) spaces. The strongly continuous semigroup of
contractions generated by \(-A\) will be denoted by \(S\), again
without explicit indication of the underlying \(L^q\) space (this is
harmless, as the family of semigroups is known to be consistent
because of the assumptions on \(A\)).  This assumption is actually too
strong for our needs: inspecting the proofs one can extrapolate on
which \(L^q\) spaces \(A\) should generate (cf. \cite{cm:SIMA18}), but
we do not do it here for the sake of simplicity.
The function \(f\colon \erre \to \erre\) is increasing and there
exists \(d \in \erre_+\) such that
\[
  \abs{f(x)} \lesssim 1 + \abs{x}^d \qquad \forall x \in \erre.
\]
The diffusion coefficient \(B\) is a strongly measurable adapted
process taking values in \(\cL(H,E)\), where \(E\) is a Banach space
such that \(S \diamond B\) is a well-defined \(L^q\) valued process,
with the value of \(q\) depending on the concept of solution (see
below). For instance, if \(S\) is analytic, \(E\) can be the domain of
a negative power of \(A\) (cf. \cite{vNVW} for several examples). As
the main results are formulated in terms of assumptions on
\(S \diamond B\), characterizing \(E\) is irrelevant.

We can now define solutions to \eqref{eq:0}. To this purpose, we need
to recall that any increasing function \(\phi\colon \erre \to \erre\)
can be uniquely extended to a maximal monotone graph
\(\widetilde{\phi} \subset \erre \times \erre\), by the procedure of
``filling the jumps'': for any \(x \in \erre\) one sets
\(\widetilde{\phi}(x) = [\phi(x-),\phi(x+)]\). We shall not
distinguish notationally between \(f\) and its unique extension to a
maximal monotone graph of \(\erre \times \erre\).

\begin{defi}
  Let \(q,r \in [1,+\infty\mathclose[\) with \(q \geq r\) and
  \(u_0 \in L^0(\cF_0;L^q)\). A \((q,r)\)-mild solution to
  \eqref{eq:0} is an adapted process
  \(u \in L^0(\Omega;C([0,T];L^q))\) such that there exists an adapted
  process \(g \in L^0(\Omega;L^1(0,T;L^r))\), with \( g \in f(u)\)
  a.e. in \(\Omega \times [0,T] \times G\) and
  \[
    u + S \ast g = Su_0 + S \diamond B,
  \]
  in the sense of indistinguishable \(L^r\)-valued processes. A
  \emph{strict mild} solution and a \emph{mild} solution are a
  \((q,q)\)-mild and a \((q,1)\)-mild solution, respectively.
\end{defi}
\noindent We implicitly intend, as part of the definition, that the
stochastic convolution is a well-defined \(L^r\)-valued process.

\begin{defi}
  Let \(q \in [1,+\infty\mathclose[\) and \(u_0 \in L^0(\cF_0;L^q)\).
  A continuous \(L^q\)-valued adapted process \(u\) is a
  \emph{generalized mild} solution to \eqref{eq:0} if it is the limit
  in \(L^0(\Omega;C([0,T];L^q))\) of a sequence of strict mild
  solutions.
\end{defi}

Solutions will be constructed using the classical scheme of
regularizing \(f\) by its Yosida approximation, about which we recall
some basic facts.
The family
\({(f_\lambda\colon \erre \to \erre)}_{\lambda \in \erre_+^\times}\)
of Yosida approximations of \(f\) is defined by
\[
f_\lambda := \frac{1}{\lambda} \bigl( I - (I+\lambda f)^{-1} \bigr).
\]
As is well known, \(f_\lambda\) is monotone and Lipschitz continuous on
\(\erre\), hence also on \(L^q\) when viewed as a superposition
operator, and satisfies \(\abs{f_\lambda(x)} \leq \abs{f(x)}\) for
every \(x \in \erre\).
The family
\({(R_\lambda\colon \erre \to \erre)}_{\lambda \in \erre_+^\times}\)
of resolvents of \(f\) is defined by
\[
R_\lambda = (I+\lambda f)^{-1}.
\]
We shall repeatedly use the identity, valid for all \(x,y \in \erre\)
and \(\lambda,\mu \in \erre_+^\times\),
\begin{equation}
  \label{eq:lm}
  \begin{split}
  x - y
  &= R_\lambda x - R_\mu y
    + x - R_\lambda x - (y - R_\mu y)\\
  &= R_\lambda x - R_\mu y
    + \lambda f_\lambda(x) - \mu f_\mu(y)
  \end{split}
\end{equation}
and the inequality
\begin{equation}
  \label{eq:flm}
  \begin{split}
  \bigl( f_\lambda(x) - f_\mu(y) \bigr)(x-y)
  &\in \bigl( f(R_\lambda x) - f(R_\mu y) \bigr) (R_\lambda x - R_\mu y)\\
  &\quad + \bigl( f_\lambda(x) - f_\mu(y) \bigr)%
  \bigl( \lambda f_\lambda(x) - \mu f_\mu(y) \bigr)\\
  &\geq \bigl( f_\lambda(x) - f_\mu(y) \bigr)%
  \bigl( \lambda f_\lambda(x) - \mu f_\mu(y) \bigr)\\
  &\gtrsim -(\lambda+\mu) \bigl( \abs{f_\lambda(x)}^2 + \abs{f_\mu(y)}^2
  \bigr).
  \end{split}
\end{equation}

Consider the regularized equation
\begin{equation}
  \label{eq:reg}
  du_\lambda(t) + Au_\lambda(t)\,dt + f_\lambda(u_\lambda(t))\,dt
  = B(t)\,dW(t),
  \qquad u_\lambda(0)=u_0.
\end{equation}
For any \(q \in [1,+\infty\mathclose[\), if \(u_0 \in L^0(\cF_0;L^q)\)
and \(S \diamond B\) is a continuous \(L^q\)-valued process, the
Lipschitz continuity of \(f_\lambda\) implies that \eqref{eq:reg}
admits a unique strict mild solution
\(u_\lambda \in L^0(\Omega;C([0,T];L^q))\), i.e.,
\[
u_\lambda + S \ast f_\lambda(u_\lambda) = Su_0 + S \diamond B.
\]

\subsection{Estimates of solutions to the regularized equation}
We shall need the following integral inequality, the proof of which
simply follows by explicitly solving Bernoulli's ODE (cf., e.g.,
\cite[p.~29]{Walter:ODE}).
\begin{lemma}
  \label{lm:Bern}
  Let \(g \in L^1(0,T;\erre_+)\), \( y_0 \in \erre_+^\times\), and
  \(y \in C([0,T])\) be such that
  \[
  y^2 \leq y_0^2 + \int_0^\cdot gy.
  \]
  Then
  \[
  \abs{y} \leq y_0 + 2\int_0^\cdot g.
  \]
\end{lemma}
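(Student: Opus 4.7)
The plan is to reduce the integral inequality to a differential inequality for an auxiliary function whose square root can be controlled explicitly. Set
\[
Y(t) := y_0^2 + \int_0^t g(s)\abs{y(s)}\,ds, \qquad t \in [0,T].
\]
Since \(g \geq 0\), the hypothesis yields \(y^2 \leq y_0^2 + \int_0^\cdot gy \leq Y\), and hence \(\abs{y} \leq \sqrt{Y}\). Because \(g\abs{y} \in L^1(0,T;\erre_+)\) (as \(y\) is continuous and \(g \in L^1\)), the function \(Y\) is absolutely continuous on \([0,T]\) with \(Y' = g\abs{y}\) a.e. Moreover, since \(Y\) is non-decreasing and \(y_0 > 0\), one has \(Y \geq y_0^2 > 0\) everywhere on \([0,T]\).

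Next, since \(\sqrt{\cdot}\) is smooth and Lipschitz on any set bounded away from zero, the composition \(\sqrt{Y}\) is absolutely continuous on \([0,T]\) and the chain rule gives, a.e.,
\[
\bigl(\sqrt{Y}\bigr)'(t) = \frac{Y'(t)}{2\sqrt{Y(t)}} = \frac{g(t)\abs{y(t)}}{2\sqrt{Y(t)}} \leq \frac{g(t)}{2},
\]
where in the last step we used \(\abs{y} \leq \sqrt{Y}\). This is exactly the linearization that arises from treating \(Y' \leq g\sqrt{Y}\) as a Bernoulli inequality. Integrating on \([0,t]\) and recalling \(\sqrt{Y(0)} = y_0\) yields
\[
\sqrt{Y(t)} \leq y_0 + \frac{1}{2}\int_0^t g,
\]
and therefore
\[
\abs{y(t)} \leq \sqrt{Y(t)} \leq y_0 + \frac{1}{2}\int_0^t g \leq y_0 + 2\int_0^t g,
\]
which is the claim (with room to spare).

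The only mild subtlety is the absolute continuity of \(\sqrt{Y}\): it is precisely here that the strict positivity \(y_0 > 0\) enters, ensuring that \(Y\) stays away from the singularity of the square root so that the chain rule applies without any regularization step. Were \(y_0 = 0\) allowed, one would first apply the argument to \(Y_\delta := Y + \delta\) for \(\delta > 0\) and then let \(\delta \to 0\); under the hypotheses of the lemma this is unnecessary.
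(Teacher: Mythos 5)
Your proof is correct and is essentially the argument the paper alludes to (the paper gives no details, only citing Walter's ODE book for ``explicitly solving Bernoulli's ODE''): the substitution \(v=\sqrt{Y}\) is exactly the linearization of the Bernoulli-type inequality \(Y'\leq g\sqrt{Y}\), and your handling of the absolute continuity of \(\sqrt{Y}\) via \(y_0>0\) is sound. As a bonus, your argument yields the sharper bound \(\abs{y}\leq y_0+\tfrac12\int_0^\cdot g\), of which the stated inequality with constant \(2\) is a weakening.
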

\begin{prop}
  \label{prop:ape1}
  Let \(q \in \mathopen]1,+\infty\mathclose[\) and \(p \in \erre_+\). If
  \(u_0 \in L^p(\cF_0;L^q)\) and
  \[
    S \diamond B \in L^p(\Omega;C([0,T];L^q)) \cap
    L^{pd}(\Omega;L^d(0,T;L^{qd})),
  \]
  then \((u_\lambda)\) is bounded in
  \(L^p(\Omega;C([0,T];L^q))\). More precisely, there exists a
  constant \(N\), independent of $\lambda$, such that
  \[
  \E\sup_{t \leq T} \norm[\big]{u_\lambda(t)}_{L^{q}}^p \leq N
  \Bigl( 1 + \E\norm[\big]{u_0}_{L^{q}}^p \Bigr).
  \]
\end{prop}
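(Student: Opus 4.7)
The plan is to eliminate the stochastic term by subtracting the stochastic convolution. Set \(z := S \diamond B\) and \(v_\lambda := u_\lambda - z\); then \(v_\lambda\) is the pathwise mild solution on \([0,T]\) to the random deterministic Cauchy problem
\[
  v_\lambda' + A v_\lambda = -f_\lambda(v_\lambda + z),
  \qquad v_\lambda(0) = u_0,
\]
whose right-hand side lies in \(L^\infty(0,T;L^q)\) almost surely, since \(f_\lambda\) is globally Lipschitz on \(\erre\) and \(u_\lambda,\,z \in C([0,T];L^q)\). I would then apply Proposition~\ref{prop:stm} with norm-exponent \(2\) (rather than \(q\))---which is permissible because \(L^{q'}\) is strictly convex for \(q \in \mathopen]1,\infty\mathclose[\)---to obtain, pathwise,
\[
  \norm{v_\lambda(t)}_{L^q}^2 \leq \norm{u_0}_{L^q}^2
  - 2 \int_0^t \ip[\big]{f_\lambda(v_\lambda(s)+z(s))}{\zeta_2(s)}\,ds,
\]
with \(\zeta_2(s) = J_2(v_\lambda(s))\), an element of \(L^{q'}\) of norm \(\norm{v_\lambda(s)}_{L^q}\) that shares the pointwise sign of \(v_\lambda(s)\).

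Next, decompose \(f_\lambda(v_\lambda+z) = \bigl[f_\lambda(v_\lambda+z) - f_\lambda(z)\bigr] + f_\lambda(z)\). By monotonicity of \(f_\lambda\), the bracketed difference has the same pointwise sign as \(v_\lambda\), so its pairing with \(\zeta_2\) is nonnegative and can be dropped. Estimating the residual by H\"older's inequality gives
\[
  \norm{v_\lambda(t)}_{L^q}^2 \leq \norm{u_0}_{L^q}^2
  + 2\int_0^t \norm[\big]{f_\lambda(z(s))}_{L^q} \norm[\big]{v_\lambda(s)}_{L^q}\,ds.
\]
The growth assumption \(\abs{f_\lambda(x)} \leq \abs{f(x)} \lesssim 1+\abs{x}^d\) together with \(\norm[\big]{\abs{z(s)}^d}_{L^q} = \norm{z(s)}_{L^{qd}}^d\) yields \(\norm{f_\lambda(z(s))}_{L^q} \lesssim 1+\norm{z(s)}_{L^{qd}}^d\). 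The resulting inequality fits Lemma~\ref{lm:Bern} (with an arbitrarily small positive perturbation of \(\norm{u_0}_{L^q}\) if that quantity vanishes), yielding the pathwise estimate
\[
  \norm{v_\lambda(t)}_{L^q} \lesssim \norm{u_0}_{L^q}
  + \int_0^T \bigl( 1 + \norm{z(s)}_{L^{qd}}^d \bigr)\,ds,
\]
which is uniform in \(\lambda\).

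To conclude, since \(u_\lambda = v_\lambda + z\), the triangle inequality and the previous estimate give
\[
  \sup_{t \leq T} \norm{u_\lambda(t)}_{L^q}
  \lesssim \norm{u_0}_{L^q}
  + \sup_{t \leq T}\norm{z(t)}_{L^q}
  + \int_0^T \bigl( 1 + \norm{z(s)}_{L^{qd}}^d \bigr)\,ds,
\]
and raising to the \(p\)-th power and taking expectation produces three terms. The first two are finite by assumption on \(u_0\) and on \(z \in L^p(\Omega;C([0,T];L^q))\); the last, after noting that \(\bigl(\int_0^T \norm{z(s)}_{L^{qd}}^d\,ds\bigr)^p = \norm{z}_{L^d(0,T;L^{qd})}^{pd}\), is controlled by the hypothesis \(z \in L^{pd}(\Omega;L^d(0,T;L^{qd}))\). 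The main difficulty is getting the nonlinear term to contribute with the favorable sign on \(L^q\) when \(q \neq 2\): the key trick is to apply Proposition~\ref{prop:stm} with norm-exponent \(2\) rather than \(q\), so that monotonicity of the superposition operator immediately gives the correct inequality and Lemma~\ref{lm:Bern} closes the argument, bypassing the need for a separate \(L^q\)-analogue of the Bernoulli inequality.
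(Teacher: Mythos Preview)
Your proof is correct and follows essentially the same route as the paper's: subtract the stochastic convolution, apply Proposition~\ref{prop:stm} with exponent \(2\) on \(L^q\), use monotonicity of \(f_\lambda\) to drop the term \(\ip{f_\lambda(v_\lambda+z)-f_\lambda(z)}{J(v_\lambda)}\), estimate the remainder by H\"older, and close with Lemma~\ref{lm:Bern} and the growth bound on \(f\). The only cosmetic difference is that you spell out explicitly why the exponent \(2\) (rather than \(q\)) is the right choice in Proposition~\ref{prop:stm}, which the paper leaves implicit.
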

\begin{proof}
  Let us set, for simplicity of notation, \(z:=S \diamond B\). The
  mild form of the regularized equation can equivalently be written as
  \[
    u_\lambda - z + S \ast f_\lambda(u_\lambda - z + z) = Su_0,
  \]
  hence \(v_\lambda:=u_\lambda - z\) is the unique mild solution to
  the deterministic evolution equation with random coefficients
  \begin{equation}
    \label{eq:vlz}
    v'_\lambda + Av_\lambda + f_\lambda(v_\lambda + z) = 0,
    \qquad v_\lambda(0)=u_0.
  \end{equation}
  By Proposition \ref{prop:stm}, setting \(\zeta := J_q(v_\lambda)\)
  and denoting the \(L^q\) norm by \(\norm{\cdot}\), one has
  \[
    \norm{v_\lambda}^2 \leq \norm{u_0}^2
    - 2\int_0^\cdot \ip[\big]{f_\lambda(v_\lambda + z)}{\zeta)},
  \]
  where, by monotonicity of \(f_\lambda\),
  \(\ip[\big]{f_\lambda(v_\lambda+z) - f_\lambda(z)}{\zeta} \geq 0\), hence
  \[
    \ip[\big]{f_\lambda(v_\lambda + z)}{\zeta)}
    \geq \ip[\big]{f_\lambda(z)}{\zeta}
    \geq - \norm[\big]{f_\lambda(z)}_{L^q} \norm[\big]{\zeta}_{L^{q'}}.
  \]
  Therefore, recalling that
  \({\norm{\zeta}}_{L^{q'}} = {\norm{v_\lambda}}_{L^q}\),
  \[
  \norm{v_\lambda}^2 \leq \norm{u_0}^2 
  + 2 \int_0^\cdot \norm{f_\lambda(z} \norm{v_\lambda},
  \]
  which implies, by Lemma \ref{lm:Bern} and the inequality
  \(\abs{f_\lambda} \leq \abs{f}\),
  \[
    \norm{v_\lambda} \leq \norm{u_0} + 4 \int_0^\cdot
    \norm{f_\lambda(z)} \leq \norm{u_0} + 4 \int_0^\cdot \norm{f(z)},
  \]
  thus also, in view of \(\abs{f} \lesssim 1 + \abs{\cdot}^d\),
  \begin{align*}
    \norm{u_\lambda}
    &\leq \norm{u_0} + \norm{z} + 4 \int_0^\cdot \norm{f(z)}\\
    &\lesssim \norm{u_0} + \norm{z} + \int_0^\cdot \bigl(
      \abs{G}^{1/q} + \norm[\big]{z}^d_{L^{qd}} \bigr).
  \end{align*}
  It immediately follows that
  \[
    \norm[\big]{u_\lambda}_{C([0,T];L^q)} \lesssim T \abs{G}^{1/q} +
    \norm{u_0} + \norm[\big]{z}_{C([0,T];L^q)} +
    \norm[\big]{z}^d_{L^d(0,T;L^{qd})}
  \]
  as well as, for every \(p \in \erre_+\),
  \[
    \norm[\big]{u_\lambda}_{L^p(\Omega;C([0,T];L^q))}
    \lesssim T\abs{G}^{1/q} + \norm[\big]{u_0}_{L^p(\Omega;L^q)}
    + \norm[\big]{z}_{L^p(\Omega;C([0,T];L^q))}
    + \norm[\big]{z}^d_{L^{pd}(\Omega;L^d(0,T;L^{qd})}.
    \qedhere
  \]
\end{proof}

\begin{rmk}
  It would be more satisfying, at least aesthetically, to have an
  argument allowing to prove the statement of Proposition
  \ref{prop:ape1} also for \(q=1\). In this case Proposition
  \ref{prop:stm} is no longer applicable because \(L^1\) does not have a
  strictly convex dual. Trying instead to apply Proposition
  \ref{prop:CL}, one arrives at
  \[
    \norm{v_\lambda} \leq \norm{u_0} + \int_0^\cdot \bigl[
    v_\lambda,-f_\lambda(v_\lambda+z) \bigr],
  \]
  from where it is unclear how to proceed, as the properties of the
  bracket are too weak to produce usable estimates. On the other hand,
  what is typically needed are estimates for \(q\) sufficiently large,
  so this is not a serious limitation.
\end{rmk}

If \(q \geq 2\), another estimate can be obtained, which requires less
integrability in space and with respect to \(\P\), but slightly more
integrability in time.
\begin{prop}
  \label{prop:ape2}
  Let \(q \in [2,+\infty\mathclose[\) and \(p \in \erre_+\). If
  \(u_0 \in L^p(\cF_0;L^q)\) and
  \[
    S \diamond B \in L^p(\Omega;C([0,T];L^q)) \cap
    L^{p(d+1)/2}(\Omega;L^{d+1}(0,T;L^{q(d+1)/2}))),
  \]
  then \((u_\lambda)\) is bounded in
  \(L^p(\Omega;C([0,T];L^q))\). More precisely, there exists a
  constant $N$, independent of $\lambda$, such that
  \[
  \E\sup_{t \leq T} \norm[\big]{u_\lambda(t)}^p_{L^q} \leq N
  \Bigl( 1 + \E\norm[\big]{u_0}_{L^q}^p \Bigr).
  \]
\end{prop}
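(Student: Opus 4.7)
The plan is to adapt the proof of Proposition~\ref{prop:ape1}, pairing the pathwise equation with the Asplund duality map \(J_q\) rather than \(J_2\) so as to exploit \(q \geq 2\). Setting \(z := S \diamond B\) and \(v_\lambda := u_\lambda - z\), the process \(v_\lambda\) satisfies the deterministic (pathwise) equation \(v_\lambda' + A v_\lambda + f_\lambda(v_\lambda + z) = 0\), \(v_\lambda(0)=u_0\), exactly as in Proposition~\ref{prop:ape1}. Since \(L^q\) is strictly convex for \(q>1\), Proposition~\ref{prop:stm} applied with the duality map \(J_q(v_\lambda) = \abs{v_\lambda}^{q-2}v_\lambda\), together with monotonicity of \(f_\lambda\) (which yields \(\ip{f_\lambda(v_\lambda+z) - f_\lambda(z)}{J_q(v_\lambda)} \geq 0\) since \(J_q(v_\lambda)\) has the sign of \(v_\lambda\) pointwise), produces
\[
  \norm{v_\lambda}_{L^q}^q \leq \norm{u_0}_{L^q}^q + q \int_0^\cdot \int_G \abs{f_\lambda(z)}\,\abs{v_\lambda}^{q-1}\,dx\,ds,
\]
reducing the task to estimating the term on the right.

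The heart of the argument is a pointwise Young inequality, using \(\abs{f_\lambda(z)} \lesssim 1 + \abs{z}^d\), with exponents chosen so that the \(z\)-factor matches exactly the given integrability \(L^{d+1}(0,T;L^{q(d+1)/2})\). The natural choice is \(P = q(d+1)/(2d)\) and \(P' = q(d+1)/[(q-2)d+q]\), yielding
\[
  \abs{z}^d\abs{v}^{q-1} \leq \frac{1}{P}\abs{z}^{q(d+1)/2} + \frac{1}{P'}\abs{v}^{(q-1)P'}.
\]
The condition \(q \geq 2\) will be used crucially to control the residual \(v\)-factor: it allows the auxiliary decomposition \(\abs{v}^{q-1} = \abs{v}^{q/2-1}\cdot\abs{v}^{q/2}\) and a Cauchy--Schwarz step which, combined with H\"older in space, reduces the \(v\)-residual to a term that can be absorbed via Gronwall into \(\norm{v_\lambda}_{L^q}^q\).

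Integrating in time and using \(\norm{z}_{L^{q(d+1)/2}_x} \in L^{d+1}(0,T)\), one expects to arrive at a differential inequality of the form
\[
  \norm{v_\lambda(t)}_{L^q}^q \leq C\bigl(\norm{u_0}_{L^q}^q + 1 + \norm{z}_{L^{d+1}(0,T;L^{q(d+1)/2})}^{q(d+1)/2}\bigr) + C\int_0^t \norm{v_\lambda(s)}_{L^q}^q\,ds,
\]
from which Gronwall's lemma (or a Bernoulli-type inequality in the spirit of Lemma~\ref{lm:Bern}) gives
\[
  \sup_{t \leq T}\norm{v_\lambda(t)}_{L^q} \lesssim \norm{u_0}_{L^q} + 1 + \norm{z}_{L^{d+1}(0,T;L^{q(d+1)/2})}^{(d+1)/2}.
\]
Writing \(u_\lambda = v_\lambda + z\), applying the triangle inequality, and taking the \(p\)-th power and expectation then yield the claim, with \(N\) independent of \(\lambda\), thanks to the hypotheses \(u_0 \in L^p(\cF_0;L^q)\), \(S\diamond B \in L^p(\Omega;C([0,T];L^q))\) and \(S\diamond B \in L^{p(d+1)/2}(\Omega;L^{d+1}(0,T;L^{q(d+1)/2}))\).

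The main technical obstacle will be the Young/H\"older step: the exponents must be arranged so that the residual \(v\)-factor stays in a norm dominated by \(\norm{v_\lambda}_{L^q}\), since otherwise the Gronwall loop does not close. The assumption \(q \geq 2\) is precisely what enables this balance, in contrast with Proposition~\ref{prop:ape1}, where only \(q>1\) was required at the price of the stronger spatial integrability condition \(\norm{z}_{L^d(0,T;L^{qd})}\).
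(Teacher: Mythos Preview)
Your reduction to \(v_\lambda\) and the pairing with \(J_q\) are fine, but the H\"older/Young step has a genuine gap. From the monotonicity bound
\[
  \norm{v_\lambda}_{L^q}^q \leq \norm{u_0}_{L^q}^q
  + q\int_0^\cdot\!\!\int_G \abs{f_\lambda(z)}\,\abs{v_\lambda}^{q-1},
\]
placing \(\abs{z}^d\) in \(L^{q(d+1)/(2d)}\) to match the hypothesis forces, by H\"older, \(v_\lambda\) into \(L^{(q-1)q(d+1)/((q-2)d+q)}\). This exponent equals \(q\) only for \(d=1\) and strictly exceeds \(q\) for every \(d>1\), so the residual cannot be absorbed by Gronwall. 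The Cauchy--Schwarz splitting \(\abs{v}^{q-1}=\abs{v}^{q/2-1}\abs{v}^{q/2}\) does not rescue this: after the next H\"older step the same obstruction reappears. In fact the sharp constraint for \(\int_G\abs{z}^d\abs{v}^{q-1}\) to be controlled by an \(L^s\) norm of \(z\) and the \(L^q\) norm of \(v\) is \(s\geq qd\), so your route can at best reproduce the hypothesis of Proposition~\ref{prop:ape1}, not the weaker spatial integrability stated here.

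The missing ingredient is a convexity (not monotonicity) estimate. Write \(f=\partial\varphi\) with \(\varphi\colon\erre\to\erre_+\) convex and let \(\varphi_\lambda\) be its Moreau regularization, so that \(f_\lambda=\varphi_\lambda'\). The subdifferential inequality gives
\[
  f_\lambda(v_\lambda+z)\,v_\lambda
  \geq \varphi_\lambda(v_\lambda+z)-\varphi_\lambda(z)
  \geq -\varphi_\lambda(z),
\]
which, after multiplying by \(\abs{v_\lambda}^{q-2}\), replaces your integrand \(\abs{f_\lambda(z)}\,\abs{v_\lambda}^{q-1}\) by \(\varphi_\lambda(z)\,\abs{v_\lambda}^{q-2}\): one power of \(v\) is traded for one extra power of \(z\). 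Since \(\abs{\varphi}\lesssim 1+\abs{\cdot}^{d+1}\), a single H\"older step with exponents \(q/2\) and \(q/(q-2)\) now closes exactly,
\[
  \int_G \varphi_\lambda(z)\,\abs{v_\lambda}^{q-2}
  \leq \norm{\varphi(z)}_{L^{q/2}}\,\norm{v_\lambda}_{L^q}^{q-2}
  \lesssim \bigl(1+\norm{z}_{L^{q(d+1)/2}}^{d+1}\bigr)\,\norm{v_\lambda}_{L^q}^{q-2},
\]
and (pairing with \(J_2\) rather than \(J_q\), so that \(\norm{v_\lambda}^{2-q}\) cancels the factor \(\norm{v_\lambda}^{q-2}\)) one obtains \(\norm{v_\lambda}_{L^q}^2 \leq \norm{u_0}_{L^q}^2 + 2\int_0^\cdot\norm{\varphi(z)}_{L^{q/2}}\) directly, with no Gronwall needed.
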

\begin{proof}
  Using the same notation of the proof of Proposition \ref{prop:ape1},
  and repeating its first steps, one has
  \[
    \norm{v_\lambda}^2 \leq \norm{u_0}^2
    - 2\int_0^\cdot \ip[\big]{f_\lambda(v_\lambda + z)}{J(v_\lambda)},
  \]
  where
  \[
    \ip[\big]{f_\lambda(v_\lambda + z)}{J(v_\lambda)} =
    \ip[\big]{f_\lambda(v_\lambda + z)}{v_\lambda\abs{v_\lambda}^{q-2}}
    \norm{v_\lambda}^{2-q}_{L^q}.
  \]
  Let \(\varphi\colon \erre \to \erre_+\) be a convex function such
  that \(f=\partial \varphi\). Denoting the Moreau regularization of
  \(\varphi\) by \(\varphi_\lambda\), one has
  \(f_\lambda = \varphi'_\lambda = \partial\varphi_\lambda\), hence,
  by the convexity of \(\varphi_\lambda\) and the definition of
  subdifferential,
  \[
  f_\lambda(v_\lambda + z)v_\lambda \geq \varphi_\lambda(v_\lambda + z)
  - \varphi_\lambda(z) \geq -\varphi_\lambda(z),
  \]
  therefore
  \[
    \norm{v_\lambda}^2 \leq \norm{u_0}^2 - 2\int_0^\cdot
    \ip[\big]{\varphi_\lambda(z)}{\abs{v_\lambda}^{q-2}}
    \norm{v_\lambda}^{2-q}
  \]
  H\"older's inequality with conjugate exponents \(q/2\) and
  \(q/(q-2)\) yields
  \begin{align*}
  \ip[\big]{\varphi_\lambda(z)}{\abs{v_\lambda}^{q-2}} 
  &\leq \norm[\big]{\varphi_\lambda(z)}_{L^{q/2}}
  \norm[\big]{\abs{v_\lambda}^{q-2}}_{L^{\frac{q}{q-2}}}\\
  &\leq \norm[\big]{\varphi(z)}_{L^{q/2}}
  \norm[\big]{v_\lambda}_{L^q}^{q-2},
  \end{align*}
  hence
  \[
    \norm{v_\lambda}^2 \leq \norm{u_0}^2 + 2 \int_0^\cdot
    \norm[\big]{\varphi(z)}_{L^{q/2}},
  \]
  which in turn implies
  \[
    \norm[\big]{v_\lambda}_{C([0,T];L^q)} \leq \norm{u_0} + \sqrt{2}
    \norm[\big]{\varphi(z)}^{1/2}_{L^1(0,T;L^{q/2})}.
  \]
  Recalling that $\partial\varphi = f$ and
  $\abs{f} \lesssim 1 + \abs{\cdot}^d$, the mean value theorem for
  convex functions 
  implies $\abs{\varphi} \lesssim 1 + \abs{\cdot}^{d+1}$, hence
  \[
    \norm[\big]{v_\lambda}_{C([0,T];L^q)} \lesssim
    T^{1/2} \abs{G}^{1/4} + \norm{u_0} +
    \norm[\big]{z}^{(d+1)/2}_{L^{d+1}(0,T;L^{(d+1)q/2})},
  \]
  thus also
  \begin{align*}
    \norm[\big]{u_\lambda}_{L^p(\Omega;C([0,T];L^q))}
    &\lesssim T^{1/2} \abs{G}^{1/4} + \norm[\big]{u_0}_{L^p(\Omega;L^q)}\\
    &\quad + \norm[\big]{z}_{L^p(\Omega;C([0,T];L^q))}
      + \norm[\big]{z}^{(d+1)/2}_{L^{p(d+1)/2}(\Omega;L^{d+1}(0,T;L^{(d+1)q/2}))}.
      \qedhere
  \end{align*}
\end{proof}

\begin{rmk}
  \label{rmk:confr}
  If \(q \geq 2\) and \(B\) is \(\gamma\)-Radonifying, better
  estimates on \((u_\lambda)\) can be obtained by stochastic calculus
  techniques. In fact, it is shown in \cite{cm:SIMA18} that, in this
  case,
  \[
    \norm[\big]{(u_\lambda)}_{L^p(\Omega;C([0,T];L^q))} \lesssim 1 +
    \norm[\big]{u_0}_{L^p(\Omega;L^q)} +
    \norm[\big]{B}_{L^p(L^2(0,T;\gamma(H;L^q)))}.
  \]
  The same estimate clearly holds with \((u_\lambda)\) replaced by
  \(S \diamond B\), hence also with \((u_\lambda)\) replaced by
  \((v_\lambda)\). Unfortunately, however, we have not been able to
  obtain such an estimate starting from the equation \eqref{eq:vlz}
  satisfied by \(v_\lambda\) and using the deterministic techniques
  employed so far.
\end{rmk}

\subsection{Convergence of approximating solutions}
We are going to determine sufficient conditions for solutions
\((u_\lambda)\) to the regularized equation \eqref{eq:reg} to form a
Cauchy sequence in spaces of continuous \(L^q\)-valued processes,
treating the cases \(q \in [2,+\infty\mathclose[\),
\(q \in \mathopen]1,2\mathclose[\), and \(q=1\) separately. To this
purpose, note that, for any \(\lambda,\mu \in \erre_+^\times\),
\(u_\lambda-u_\mu\) satisfies the identity
\[
  (u_\lambda-u_\mu)' + A(u_\lambda - u_\mu)
  + f_\lambda(u_\lambda) - f_\mu(u_\mu) = 0,
  \qquad (u_\lambda-u_\mu)(0)=0
\]
in the mild sense.
\begin{lemma}
  \label{lm:C2+}
  Let \(q \in [2,\infty\mathclose[\), \(p \in \erre_+\), and
  \[
    p^\ast := \frac{p(2d+q-2)}{q}.
  \]
  If \((u_\lambda)\) is bounded in
  \(L^{p^\ast}(\Omega;L^{2d+q-2}([0,T] \times G))\), then
  it is a Cauchy sequence in
  \(L^p(\Omega;C([0,T];L^q))\).
\end{lemma}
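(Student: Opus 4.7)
The plan is to apply Proposition \ref{prop:stm} pathwise to $w := u_\lambda - u_\mu$, which solves in the mild sense the linear equation $w' + Aw = -(f_\lambda(u_\lambda) - f_\mu(u_\mu))$ with $w(0)=0$. Since $q \geq 2$ the dual $L^{q'}$ is strictly convex, and the right-hand side is continuous with values in $L^q$ pathwise (because each $f_\lambda$ is globally Lipschitz as a superposition operator), so the proposition applies pathwise and yields, using the explicit form $J_q(w) = \abs{w}^{q-2}w$,
\[
  \norm{w(t)}_{L^q}^q
  \leq -q\int_0^t\int_G \bigl(f_\lambda(u_\lambda) - f_\mu(u_\mu)\bigr)(u_\lambda - u_\mu)\,\abs{w}^{q-2}\,dx\,ds.
\]

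Next I would invoke the pointwise inequality \eqref{eq:flm} to extract the crucial small factor $(\lambda+\mu)$: the integrand above is bounded by a constant multiple of $(\lambda+\mu)\bigl(\abs{f_\lambda(u_\lambda)}^2 + \abs{f_\mu(u_\mu)}^2\bigr)\abs{w}^{q-2}$, and the growth bound $\abs{f_\lambda} \leq \abs{f} \lesssim 1 + \abs{\cdot}^d$ further dominates it by $(\lambda+\mu)\bigl(1 + \abs{u_\lambda}^{2d} + \abs{u_\mu}^{2d}\bigr)\abs{w}^{q-2}$. Applying H\"older's inequality on $[0,T]\times G$ with conjugate exponents $\alpha := (2d+q-2)/(2d)$ and $\alpha' := (2d+q-2)/(q-2)$, which are chosen precisely so that $2d\alpha = (q-2)\alpha' = 2d+q-2$, produces
\[
  \sup_{t \leq T}\norm{w(t)}_{L^q}^q
  \lesssim (\lambda+\mu)\bigl(1 + \norm{u_\lambda}_{L^{2d+q-2}}^{2d} + \norm{u_\mu}_{L^{2d+q-2}}^{2d}\bigr)\norm{w}_{L^{2d+q-2}}^{q-2},
\]
where the $L^{2d+q-2}$ norms on the right are taken over $[0,T]\times G$.

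The final step is to raise the previous display to the power $p/q$, take expectation, and apply H\"older in $\Omega$ with the same pair of conjugate exponents $\alpha,\alpha'$. The critical exponent $p^\ast = p(2d+q-2)/q$ is designed exactly so that $(p/q)\cdot 2d\cdot\alpha = (p/q)\cdot (q-2)\cdot\alpha' = p^\ast$, hence both resulting factors reduce to moments of $\norm{\cdot}_{L^{2d+q-2}}^{p^\ast}$, which are uniformly controlled by hypothesis. This delivers
\[
  \E\sup_{t \leq T}\norm{w(t)}_{L^q}^p
  \lesssim (\lambda+\mu)^{p/q},
\]
with implicit constant independent of $\lambda,\mu$, proving the Cauchy property. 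The principal technical point is the simultaneous alignment of the H\"older exponents in space-time and in probability so as to exactly hit the single hypothesized integrability exponent $p^\ast$; once that bookkeeping is in place, inequality \eqref{eq:flm} does the essential work of producing the vanishing factor $(\lambda+\mu)^{p/q}$.
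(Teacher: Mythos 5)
Your proof is correct and follows essentially the same route as the paper: subtract, apply Proposition \ref{prop:stm} with the duality map \(J_q\), extract the factor \(\lambda+\mu\) via \eqref{eq:flm}, and close the estimate using the growth of \(f\) and the \(L^{2d+q-2}\) bound. The only (cosmetic) difference is in the final bookkeeping: the paper absorbs \(\abs{f_\lambda(u_\lambda)}^2+\abs{f_\mu(u_\mu)}^2\) and \(\abs{u_\lambda-u_\mu}^{q-2}\) into \(1+\abs{u_\lambda}^{2d+q-2}+\abs{u_\mu}^{2d+q-2}\) by a pointwise Young inequality, so that a single power of the hypothesized norm appears and no H\"older step in \(\Omega\) is needed, whereas you run H\"older twice (in space-time and in probability) with the matched exponents \((2d+q-2)/(2d)\) and \((2d+q-2)/(q-2)\) — both reduce to the same exponent \(p^\ast\).
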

\begin{proof}
  Let \(\lambda,\mu \in \erre_+^\times\). Denoting the \(L^q\) norm by
  \(\norm{\cdot}\), Proposition \ref{prop:stm} yields
  \[
    \norm[\big]{u_\lambda - u_\mu}^q \leq -q \int_0^\cdot
    \ip[\big]{f_\lambda(u_\lambda)-f_\mu(u_\mu)}{J_q(u_\lambda-u_\mu)},
  \]
  where
  \[
    \ip[\big]{f_\lambda(u_\lambda)-f_\mu(u_\mu)}{J_q(u_\lambda-u_\mu)}
    = \ip[\big]{f_\lambda(u_\lambda)-f_\mu(u_\mu)}%
      {(u_\lambda - u_\mu)\abs{u_\lambda - u_\mu}^{q-2}}
  \]
  and, by \eqref{eq:flm},
  \begin{align*}
  &\ip[\big]{f_\lambda(u_\lambda)-f_\mu(u_\mu)}%
    {(u_\lambda - u_\mu)\abs{u_\lambda - u_\mu}^{q-2}}\\
  &\hspace{3em} \geq \ip[\big]{f_\lambda(u_\lambda)-f_\mu(u_\mu)}%
    {(\lambda f_\lambda(u_\lambda) - \mu f_\mu(u_\mu))%
    \abs{u_\lambda - u_\mu}^{q-2}}\\
  &\hspace{3em} \gtrsim - (\lambda+\mu) \ip[\big]{\abs{f_\lambda(u_\lambda)}^2%
    + \abs{f_\mu(u_\mu)}^2}{\abs{u_\lambda - u_\mu}^{q-2}},
  \end{align*}
  hence
  \[
    \norm{u_\lambda - u_\mu}^q \lesssim q (\lambda + \mu) \int_0^\cdot
    \ip[\big]{\abs{f_\lambda(u_\lambda)}^2 + \abs{f_\mu(u_\mu)}^2}%
    {\abs{u_\lambda - u_\mu}^{q-2}}.
  \]
  The growth condition on \(f\) and elementary estimates imply
  \[
    \bigl( \abs{f_\lambda(x)}^2 + \abs{f_\mu(y)}^2 \bigr) \abs{x - y}^{q-2}
    \lesssim 1 + \abs{x}^{2d+q-2} + \abs{y}^{2d+q-2}
  \]
  with implicit constant depending on \(d\) and \(q\). This in turn
  implies
  \[
  \ip[\big]{\abs{f_\lambda(u_\lambda)}^2%
    + \abs{f_\mu(u_\mu)}^2}{\abs{u_\lambda - u_\mu}^{q-2}}
  \lesssim 1 + \norm[\big]{u_\lambda}^{2d+q-2}_{L^{2d+q-2}}
  + \norm[\big]{u_\mu}^{2d+q-2}_{L^{2d+q-2}}
  \]
  with implicit constant depending on \(d\), \(q\), and (the Lebesgue
  measure of) \(G\), hence
  \[
    \norm[\big]{u_\lambda-u_\mu}^q_{C([0,T];L^q)} \lesssim (\lambda+\mu)
    \Bigl( T + \norm[\big]{u_\lambda}^{2d+q-2}_{L^{2d+q-2}([0,T] \times G)}
      + \norm[\big]{u_\mu}^{2d+q-2}_{L^{2d+q-2}([0,T] \times G)} \Bigr),
  \]
  which yields
  \[
    \norm[\big]{u_\lambda-u_\mu}_{C([0,T];L^q)} \lesssim
    (\lambda+\mu)^{1/q} \Bigl( T^{1/q} +
    \norm[\big]{u_\lambda}^{\frac{2d+q-2}{q}}_{L^{2d+q-2}([0,T] \times G)} +
    \norm[\big]{u_\mu}^{\frac{2d+q-2}{q}}_{L^{2d+q-2}([0,T] \times G)} \Bigr).
  \]
  Noting that
  \[
    \norm[\Big]{\norm[\big]{u_\lambda}^{\frac{2d+q-2}{q}}_{L^{2d+q-2}([0,T]
        \times G)}}_{L^p(\Omega)} =
    \norm[\big]{u_\lambda}^{\frac{2d+q-2}{q}}_{L^{p^\ast}(\Omega;L^{2d+q-2}([0,T]
      \times G))}
  \]
  completes the proof.
\end{proof}

\begin{rmk}
  The same estimate could have been obtained invoking Proposition
  \ref{prop:stm} with exponent two, thus using the ``standard''
  duality map. In this case, however, the term
  \(\norm{u_\lambda-u_\mu}^{2-q}\) would appear, making computations
  somewhat more cumbersome. In particular, in order to reach the
  desired conclusion, a differential inequality of the type
  \(y' \leq g(s)y^{-\alpha}\), with \(\alpha\) and \(g\) a positive
  constant and a positive function, respectively, needs to be
  solved.
\end{rmk}

\begin{rmk}
  A less sharp sufficient condition for the claim of Lemma
  \ref{lm:C2+} to hold can be obtained by H\"older's inequality with
  conjugate exponents \(q/2\) and \(q/(q-2)\), that yields the estimate
  \[
    \ip[\big]{\abs{f_\lambda(u_\lambda)}^2 + \abs{f_\mu(u_\mu)}^2}%
             {\abs{u_\lambda - u_\mu}^{q-2}}
    \leq \bigl( \norm{f_\lambda(u_\lambda)}^q + \norm{f_\mu(u_\mu)}^q
         \bigr)
    \norm{u_\lambda - u_\mu}^{q-2},
  \]
  which implies
  \[
    \norm{u_\lambda-u_\mu}^2 \lesssim (\lambda+\mu) \int_0^\cdot \bigl(
    \norm{f_\lambda(u_\lambda)}^q + \norm{f_\mu(u_\mu)}^q \bigr).
  \]
  One would then need \((u_\lambda)\) to be bounded in
  \(L^{qd}([0,T] \times G)\) in order for it to be a Cauchy sequence
  in \(C([0,T];L^q)\). Since \(q \geq 2\) implies that
  \(2d+q-2 \leq qd\) for every \(d \geq 0\), boundedness in
  \(L^{qd}([0,T] \times G)\) is a stronger requirement than that of
  the previous lemma.
\end{rmk}

Let us now consider the case \(q \in \mathopen]1,2\mathclose[\).
\begin{lemma}
  \label{lm:C1+}
  Let \(q \in \mathopen]1,2\mathclose[\) and \(p \in \erre_+\). If
  \((u_\lambda)\) is bounded in
  \(L^{pd}(\Omega;L^{qd}([0,T] \times G))\), then it is a Cauchy
  sequence in \(L^p(\Omega;C([0,T];L^q))\).
\end{lemma}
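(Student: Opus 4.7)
The plan is to apply Proposition \ref{prop:stm} with exponent $q$ to the difference $u_\lambda - u_\mu$, which solves the mild equation $(u_\lambda - u_\mu) + S \ast (f_\lambda(u_\lambda) - f_\mu(u_\mu)) = 0$ with zero initial datum. Since $L^{q'}$ is strictly convex, this yields
\[
  \norm{u_\lambda - u_\mu}_{L^q}^q \leq -q \int_0^\cdot
  \ip{f_\lambda(u_\lambda) - f_\mu(u_\mu)}{J_q(u_\lambda - u_\mu)}.
\]
Unlike the situation of Lemma \ref{lm:C2+}, here $J_q\phi = \abs{\phi}^{q-2}\phi$ is singular at zero because $q-2<0$, so the route through \eqref{eq:flm}, which picks up a factor $\abs{u_\lambda - u_\mu}^{q-2}$, is not directly available. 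This will be the main obstacle.

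To circumvent it I would use the resolvent identity \eqref{eq:lm} to write $u_\lambda - u_\mu = (R_\lambda u_\lambda - R_\mu u_\mu) + (\lambda f_\lambda(u_\lambda) - \mu f_\mu(u_\mu))$ and split the pointwise integrand as
\[
  j_q(u_\lambda - u_\mu) = j_q(R_\lambda u_\lambda - R_\mu u_\mu) + \bigl[ j_q(u_\lambda - u_\mu) - j_q(R_\lambda u_\lambda - R_\mu u_\mu) \bigr].
\]
Since $f_\lambda(u_\lambda) \in f(R_\lambda u_\lambda)$ and $f$ is monotone, the factors $f_\lambda(u_\lambda) - f_\mu(u_\mu)$ and $j_q(R_\lambda u_\lambda - R_\mu u_\mu)$ have the same sign pointwise, so the first piece contributes a non-negative integrand that can be discarded. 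For the second piece, Lemma \ref{lm:j} (applicable since $q \in (1,2)$) gives the H\"older-type bound
\[
  \bigl| j_q(u_\lambda - u_\mu) - j_q(R_\lambda u_\lambda - R_\mu u_\mu) \bigr|
  \lesssim_q \abs{\lambda f_\lambda(u_\lambda) - \mu f_\mu(u_\mu)}^{q-1},
\]
which, via Lemma \ref{lm:xya} (with exponent $q-1 \in (0,1)$), is further dominated by $\lambda^{q-1}\abs{f_\lambda(u_\lambda)}^{q-1} + \mu^{q-1}\abs{f_\mu(u_\mu)}^{q-1}$. Multiplying by $\abs{f_\lambda(u_\lambda) - f_\mu(u_\mu)} \leq \abs{f_\lambda(u_\lambda)} + \abs{f_\mu(u_\mu)}$ and applying Young's inequality produces the pointwise estimate
\[
  -(f_\lambda(u_\lambda) - f_\mu(u_\mu)) \, j_q(u_\lambda - u_\mu)
  \lesssim_q (\lambda^{q-1} + \mu^{q-1})\bigl( \abs{f_\lambda(u_\lambda)}^q + \abs{f_\mu(u_\mu)}^q \bigr).
\]

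Combining this with $\abs{f_\lambda} \leq \abs{f} \lesssim 1 + \abs{\cdot}^d$ and integrating in space and time leads to
\[
  \norm{u_\lambda - u_\mu}_{C([0,T];L^q)}^q \lesssim (\lambda^{q-1} + \mu^{q-1}) \Bigl( T\abs{G} + \norm{u_\lambda}^{qd}_{L^{qd}([0,T]\times G)} + \norm{u_\mu}^{qd}_{L^{qd}([0,T] \times G)} \Bigr).
\]
Taking $q$-th roots, using subadditivity of $x \mapsto x^{1/q}$, and then the $L^p(\Omega)$-norm, the identity $\norm{\norm{u_\lambda}_{L^{qd}}^d}_{L^p(\Omega)} = \norm{u_\lambda}_{L^{pd}(\Omega; L^{qd})}^d$ shows that the hypothesis of boundedness of $(u_\lambda)$ in $L^{pd}(\Omega; L^{qd}([0,T]\times G))$ yields the Cauchy property, with explicit rate $(\lambda^{q-1}+\mu^{q-1})^{1/q}\to 0$. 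The technical core of the proof is the resolvent splitting: it trades the singular expression $(f_\lambda - f_\mu) J_q(u_\lambda - u_\mu)$ for a sign-good monotone piece (which drops) plus a correction small in $\lambda+\mu$, and is precisely what allows one to dispense with the $q \geq 2$ restriction.
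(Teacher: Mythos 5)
Your proposal is correct and follows essentially the same route as the paper: Proposition \ref{prop:stm} with exponent \(q\), the resolvent splitting of \(J_q(u_\lambda-u_\mu)\) via \eqref{eq:lm} with the monotone piece discarded by sign, the \((q-1)\)-H\"older bound of Lemma \ref{lm:j} on the remainder, and the growth condition on \(f\) to close the estimate in \(L^{qd}([0,T]\times G)\). The only (immaterial) difference is that you distribute \(\abs{\lambda f_\lambda(u_\lambda)-\mu f_\mu(u_\mu)}^{q-1}\) via Lemma \ref{lm:xya} and Young's inequality, whereas the paper bounds it directly by \((\lambda+\mu)^{q-1}\bigl(\abs{f_\lambda(u_\lambda)}+\abs{f_\mu(u_\mu)}\bigr)^{q-1}\).
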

\begin{proof}
  Proposition \ref{prop:stm} yields, for any
  \(\lambda,\mu \in \erre_+^\times\),
  \[
    \norm{u_\lambda-u_\mu}^q + \int_0^\cdot
    \ip[\big]{f_\lambda(u_\lambda)-f_\mu(u_\mu)}{J_q(u_\lambda-u_\mu)} \leq 0,
  \]
  where, recalling \eqref{eq:lm},
  \begin{align*}
  &\ip[\big]{f_\lambda(u_\lambda)-f_\mu(u_\mu)}{J_q(u_\lambda-u_\mu)}\\
  &\hspace{3em} = \ip[\big]{f(R_\lambda u_\lambda) - f(R_\mu u_\mu)}%
    {J_q(R_\lambda u_\lambda - R_\mu u_\mu)}\\
  &\hspace{3em}\quad + \ip[\big]{f_\lambda(u_\lambda)-f_\mu(u_\mu)}%
    {J_q(u_\lambda-u_\mu) - J_q(R_\lambda u_\lambda - R_\mu u_\mu)}\\
  &\hspace{3em}\geq \ip[\big]{f_\lambda(u_\lambda)-f_\mu(u_\mu)}%
    {J_q(u_\lambda-u_\mu) - J(R_\lambda u_\lambda - R_\mu u_\mu)}.
  \end{align*}
  Since, by Lemma \ref{lm:j},
  \(\abs[\big]{J_q(\phi)-J_q(\psi)} \lesssim_q
  \abs[\big]{\phi-\psi}^{q-1}\) for any \(\phi,\psi \in L^q\), one has
  \begin{align*}
    \abs[\big]{J_q(u_\lambda-u_\mu) - J_q(R_\lambda u_\lambda - R_\mu u_\mu)}
    &\lesssim_q \abs[\big]{%
      u_\lambda - R_\lambda u_\lambda - (u_\mu - R_\mu u_\mu)}^{q-1}\\
    &= \abs[\big]{\lambda f_\lambda(u_\lambda) - \mu f_\mu(u_\mu)}^{q-1}\\
    &\leq (\lambda+\mu)^{q-1} \bigl( \abs{f_\lambda(u_\lambda)}
      + \abs{f_\mu(u_\mu)} \bigr)^{q-1},
  \end{align*}
  which in turn implies
  \begin{align*}
    &\ip[\big]{f_\lambda(u_\lambda)-f_\mu(u_\mu)}{J_q(u_\lambda-u_\mu)}\\
    &\hspace{3em} \gtrsim_q - (\lambda + \mu)^{q-1} \Bigl(
      \norm[\big]{f_\lambda(u_\lambda)}^{q}
      + \norm[\big]{f_\mu(u_\mu)}^{q} \Bigr)\\
    &\hspace{3em} \gtrsim_{\abs{G}} - (\lambda + \mu)^{q-1} \Bigl(
      1 + \norm[\big]{u_\lambda}_{L^{qd}}^{qd}
      + \norm[\big]{u_\mu}_{L^{qd}}^{qd} \Bigr),
  \end{align*}
  thus also
  \[
    \norm{u_\lambda-u_\mu}^q \lesssim (\lambda + \mu)^{q-1}
    \int_0^\cdot \Bigl( 1 + \norm[\big]{u_\lambda}_{L^{qd}}^{qd} +
    \norm[\big]{u_\mu}_{L^{qd}}^{qd} \Bigr)
  \]
  as well as
  \[
    \norm[\big]{u_\lambda-u_\mu}_{C([0,T];L^q)}^q \lesssim
    (\lambda + \mu)^{q-1} \Bigl( T +
    \norm[\big]{u_\lambda}_{L^{qd}([0,T] \times G)}^{qd} +
        \norm[\big]{u_\mu}_{L^{qd}([0,T] \times G)}^{qd} \Bigr),
  \]
  with an implicit constant depending on \(q\) and \(\abs{G}\). Then
  \[
    \norm[\big]{u_\lambda-u_\mu}_{C([0,T];L^q)} \lesssim (\lambda +
    \mu)^{\frac{q-1}{q}} \Bigl( T^{1/q} +
    \norm[\big]{u_\lambda}_{L^{qd}([0,T] \times G)}^{d} +
    \norm[\big]{u_\mu}_{L^{qd}([0,T] \times G)}^{d} \Bigr),
  \]
  from which the claim follows by virtue of the identity
  \[
    \norm[\Big]{\norm[\big]{u_\lambda}%
      _{L^{qd}([0,T] \times G)}^{d}}_{L^p(\Omega)} =
    \norm[\big]{u_\lambda}^d_{L^{pd}(\Omega;L^{qd}([0,T] \times G))}.
    \qedhere
  \]
\end{proof}

The case \(q=1\) is more involved. We need some approximations of the
sign and the modulus functions that we introduce next.
Let us define the family
\({(\gamma_\varepsilon\colon \erre \to \erre_+)}_{\varepsilon \in
  \erre_+^\times}\) of piecewise linear approximations of the sign
function by
\[
  \gamma_\varepsilon(x) :=
  \begin{cases}
    -1, & x \in \mathopen]-\infty,-\sqrt{\varepsilon}/2\mathclose[,\\
    \displaystyle \frac{1}{\sqrt{\varepsilon}} x,
    & x \in [-\sqrt{\varepsilon}/2,\sqrt{\varepsilon}/2\mathclose],\\
    1, & x \in \mathopen]\sqrt{\varepsilon}/2,+\infty\mathclose[.
  \end{cases}
\]
Then the family of functions
\({(\Gamma^0_\varepsilon\colon \erre \to \erre_+)}_{\varepsilon \in
  \erre_+^\times}\) defined by
\[
  \Gamma_\varepsilon^0(x) = \frac{\sqrt{\varepsilon}}{4}
  + \int_0^x \gamma_\varepsilon(y)\,dy
\]
is a family of convex, even, \(C^1\) approximation of \(\abs{\cdot}\)
satisfying the following properties:
\begin{itemize}
\item[(i)] \(\gamma_\varepsilon := (\Gamma^0_\varepsilon)'\), hence
  \(\Gamma^0_\varepsilon\) is Lipschitz continuous with Lipschitz
  constant equal to one;
\item[(ii)] \(\Gamma^0_\varepsilon(x) = \abs{x}\) for every
  \(x \in \erre \setminus
  \mathopen]-\sqrt{\varepsilon}/2,\sqrt{\varepsilon}/2\mathclose[\);
\item[(iii)] \(\Gamma^0_\varepsilon(x) \geq \abs{x}\) for every
  \(x \in \erre\);
\item[(iv)]
  \(\sup_{x \in \erre} \abs[\big]{\Gamma^0_\varepsilon(x) - \abs{x}} =
  \Gamma^0_\varepsilon(0) = \sqrt{\varepsilon}/4\).
\end{itemize}
Since \(\Gamma^0_\varepsilon\) is Lipschitz continuous for every
\(\varepsilon \in \erre_+^\times\), setting 
\[
  \Gamma_\varepsilon\colon \phi \longmapsto \int_G
  \Gamma^0_\varepsilon \circ \phi.
\]
defines a family of maps
\({(\Gamma_\varepsilon\colon L^1 \to \erre_+)}_{\varepsilon \in
  \erre_+^\times}\). Moreover, as \(\Gamma^0_\varepsilon\) is also
continuously differentiable, it is not hard to see that if
\(\phi\colon [0,T] \to L^1\) is a strongly differentiable map, then
\[
  \bigl( \Gamma_\varepsilon(\phi(t)) \bigr)'
  = \int_G \gamma_\varepsilon(\phi(t)) \phi'(t)
  = \ip[\big]{\gamma_\varepsilon(\phi(t))}{\phi'(t)}
  \qquad \forall t \in [0,T].
\]
\begin{prop}
  \label{prop:C1}
  Assume that \(S\) is subMarkovian. If \((f_\lambda(u_\lambda))\) is
  equiintegrable on \([0,T] \times G\) a.s., then \((u_\lambda)\) is a
  Cauchy sequence in \(C([0,T;L^1)\) a.s.
\end{prop}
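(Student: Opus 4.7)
The plan is to pair the mild equation satisfied by $w := u_\lambda - u_\mu$ against the smooth approximate sign $\gamma_\varepsilon$, invoke the sub-Markovian property of $S$ through a Kato-type inequality to discard the $A$-term, and then use the monotonicity of $f$ together with the equiintegrability assumption and Lemma~\ref{lm:EIconv} to show that the remainder vanishes. All arguments are pathwise. Setting $F := f_\mu(u_\mu) - f_\lambda(u_\lambda)$, the process $w$ is a mild solution of $w' + Aw = F$ with $w(0)=0$, and the equiintegrability of $(f_\lambda(u_\lambda))$ on $[0,T]\times G$ provides $L^1$-boundedness, so in particular $F \in L^1(0,T;L^1)$.

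The first step is the energy-type inequality
\[
  \Gamma_\varepsilon(w(t)) \leq \Gamma_\varepsilon(0) + \int_0^t \ip{\gamma_\varepsilon(w(s))}{F(s)}\,ds.
\]
I would regularize $w$ via $w^\delta := (I+\delta A)^{-1}w$ as in the proof of Proposition~\ref{prop:stm}, so that $w^\delta$ is a strong solution to $(w^\delta)' + Aw^\delta = F^\delta$ with $w^\delta(0)=0$. The chain rule for $t \mapsto \Gamma_\varepsilon(w^\delta(t))$ recalled at the end of \S2.4 gives $\frac{d}{dt}\Gamma_\varepsilon(w^\delta) = -\ip{\gamma_\varepsilon(w^\delta)}{Aw^\delta} + \ip{\gamma_\varepsilon(w^\delta)}{F^\delta}$ a.e., and the Kato-type inequality $\ip{\gamma_\varepsilon(w^\delta)}{Aw^\delta} \geq 0$, valid because $A$ generates a sub-Markovian semigroup and $\gamma_\varepsilon$ is a bounded, nondecreasing, Lipschitz function vanishing at the origin, absorbs the $A$-term. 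Passing to the limit $\delta \to 0$ using strong convergence of $w^\delta$ and $F^\delta$, Lipschitz continuity of $\Gamma_\varepsilon$ on $L^1$, and dominated convergence (with $\abs{\gamma_\varepsilon} \leq 1$) yields the displayed inequality. Combining with $\Gamma^0_\varepsilon \geq \abs{\cdot}$ and $\Gamma_\varepsilon(0) = \abs{G}\sqrt{\varepsilon}/4$ gives
\[
  \norm{w(t)}_{L^1} \leq \frac{\abs{G}\sqrt{\varepsilon}}{4}
  + \int_0^t \ip{\gamma_\varepsilon(w)}{f_\mu(u_\mu) - f_\lambda(u_\lambda)}\,ds.
\]

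Next, decompose $\gamma_\varepsilon(w) = \gamma_\varepsilon(R_\lambda u_\lambda - R_\mu u_\mu) + \bigl[\gamma_\varepsilon(w) - \gamma_\varepsilon(R_\lambda u_\lambda - R_\mu u_\mu)\bigr]$. Since $f_\lambda(u_\lambda) = f(R_\lambda u_\lambda)$, the monotonicity of $f$ and the fact that $\gamma_\varepsilon$ is odd and nondecreasing yield, pointwise,
\[
  \gamma_\varepsilon(R_\lambda u_\lambda - R_\mu u_\mu)\bigl(f(R_\mu u_\mu) - f(R_\lambda u_\lambda)\bigr) \leq 0,
\]
so this contribution is non-positive and may be dropped, leaving
\[
  \norm{w(t)}_{L^1} \leq \frac{\abs{G}\sqrt{\varepsilon}}{4}
  + \int_0^t \ip{\gamma_\varepsilon(w) - \gamma_\varepsilon(R_\lambda u_\lambda - R_\mu u_\mu)}{F}\,ds.
\]

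Finally, I would apply Lemma~\ref{lm:EIconv} on the finite measure space $[0,T] \times G$: the family $(F)$ is equiintegrable by hypothesis, and $g := \gamma_\varepsilon(w) - \gamma_\varepsilon(R_\lambda u_\lambda - R_\mu u_\mu)$ is uniformly bounded in $L^\infty$ by $2$. The identity $w - (R_\lambda u_\lambda - R_\mu u_\mu) = \lambda f_\lambda(u_\lambda) - \mu f_\mu(u_\mu)$, combined with the $L^1$-boundedness of $(f_\lambda(u_\lambda))$, shows that this difference tends to zero in $L^1([0,T]\times G)$, hence in measure, and the Lipschitz continuity of $\gamma_\varepsilon$ then forces $g \to 0$ in measure. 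Applying Lemma~\ref{lm:EIconv} to $\abs{F}$ and $\abs{g}$ yields $\int_0^T\!\!\int_G \abs{g}\abs{F}\,dx\,ds \to 0$; taking $\sup_{t\le T}$ and then $\limsup_{\lambda,\mu\to 0}$ bounds the left-hand side above by $\abs{G}\sqrt{\varepsilon}/4$, and letting $\varepsilon \to 0$ gives the Cauchy property. The principal obstacle is the first step: the rigorous justification of the chain-rule-plus-Kato estimate for the mild solution $w$, this being the sole juncture at which the sub-Markovian hypothesis on $S$ enters the argument, and without which the $A$-term cannot be absorbed.
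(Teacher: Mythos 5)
Your proof is correct and follows essentially the same route as the paper: resolvent regularization plus the Br\'ezis--Strauss lemma to discard the \(A\)-term, the decomposition through \(R_\lambda u_\lambda - R_\mu u_\mu\) combined with the monotonicity of \(f\), and Lemma~\ref{lm:EIconv} together with equiintegrability to make the remainder vanish. The only (harmless) difference is that you keep the smoothing parameter \(\varepsilon\) of \(\gamma_\varepsilon\) fixed, send \(\lambda,\mu \to 0\) first and \(\varepsilon \to 0\) afterwards, whereas the paper couples \(\varepsilon = \lambda+\mu\) and controls \(\gamma_{\lambda+\mu}(u_\lambda-u_\mu)-\gamma_{\lambda+\mu}(R_\lambda u_\lambda-R_\mu u_\mu)\) by \(2 \wedge \sqrt{\lambda+\mu}\,\bigl(\abs{f_\lambda(u_\lambda)}+\abs{f_\mu(u_\mu)}\bigr)\) in a single limit.
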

\begin{proof}
  Let us set, for notational conciseness,
  \[
    y_{\lambda\mu} := u_\lambda - u_\mu, \qquad
    g_{\lambda\mu} := f_\lambda(u_\lambda) - f_\mu(u_\mu)
  \]
  for any \(\lambda,\mu \in \erre_+^\times\), so that
  \(y'_{\lambda\mu} + Ay_{\lambda\mu} + g_{\lambda\mu}=0\) in the mild
  sense, with \(y_{\lambda\mu}(0)=0\). Setting, for any
  \(\varepsilon \in \erre_+^\times\),
  \[
    y_{\lambda\mu}^\varepsilon := (I+\varepsilon A)^{-1} y_{\lambda\mu},
    \qquad
    g_{\lambda\mu}^\varepsilon := (I+\varepsilon A)^{-1} g_{\lambda\mu},
  \]
  one has
  \((y^\varepsilon_{\lambda\mu})' + Ay^\varepsilon_{\lambda\mu} +
  g^\varepsilon_{\lambda\mu}=0\) in the strong sense. Multiplying
  pointwise both sides of this identity by
  \(\gamma_{\lambda+\mu}(y^\varepsilon_{\lambda\mu})\) and integrating
  over \(G\) yields
  \[
  \frac{d}{dt} \Gamma_{\lambda+\mu}(y^\varepsilon_{\lambda\mu})
  + \ip[\big]{Ay^\varepsilon_{\lambda\mu}}%
       {\gamma_{\lambda+\mu}(y^\varepsilon_{\lambda\mu})}
  + \ip[\big]{g_{\lambda\mu}^\varepsilon}%
    {\gamma_{\lambda+\mu}(y^\varepsilon_{\lambda\mu})} = 0.
  \]
  Since \(A\) is the generator of a subMarkovian semigroup of
  contractions and \(\gamma_{\lambda+\mu}\) is an increasing function
  with \(\gamma_{\lambda+\mu}(0)=0\), it follows by a lemma of
  Br\'ezis and Strauss (see \cite[Lemma~2]{BreStr}) that the second
  term on the left-hand side is positive, hence
  \[
    \Gamma_{\lambda+\mu}(y^\varepsilon_{\lambda\mu})
    + \int_0^\cdot \ip[\big]{g_{\lambda\mu}^\varepsilon}%
    {\gamma_{\lambda+\mu}(y^\varepsilon_{\lambda\mu})} \leq 0,
  \]
  thus also, taking the limit as \(\varepsilon \to 0\),
  \[
    \Gamma_{\lambda+\mu}(u_\lambda - u_\mu)
    + \int_0^\cdot \ip[\big]{f_\lambda(u_\lambda) - f_\mu(u_\mu)}%
    {\gamma_{\lambda+\mu}(u_\lambda - u_\mu)} \leq 0.
  \]
  Moreover, writing
  \begin{align*}
    \gamma_{\lambda+\mu} \bigl( u_\lambda - u_\mu \bigr)
    &= \gamma_{\lambda+\mu} \bigl( R_\lambda u_\lambda - R_\mu u_\mu \bigr)\\
    &\quad + \gamma_{\lambda+\mu} \bigl( u_\lambda - u_\mu \bigr)
      - \gamma_{\lambda+\mu} \bigl( R_\lambda u_\lambda - R_\mu u_\mu \bigr),
  \end{align*}
  one has
  \begin{align*}
    &\bigl( f_\lambda(u_\lambda) - f_\mu(u_\mu) \bigr)
      \gamma_{\lambda+\mu}(u_\lambda-u_\mu)\\
    &\qquad \in \bigl( f(R_\lambda u_\lambda) - f(R_\mu u_\mu) \bigr)
      \gamma_{\lambda+\mu} \bigl( R_\lambda u_\lambda - R_\mu u_\mu \bigr)\\
    &\qquad \quad  + \bigl( f_\lambda(u_\lambda) - f_\mu(u_\mu) \bigr)
      \Bigl( \gamma_{\lambda+\mu} \bigl( u_\lambda - u_\mu \bigr)
      - \gamma_{\lambda+\mu} \bigl( R_\lambda u_\lambda - R_\mu u_\mu \bigr)
      \Bigr)\\
    &\qquad \geq \bigl( f_\lambda(u_\lambda) - f_\mu(u_\mu) \bigr)
      \Bigl( \gamma_{\lambda+\mu} \bigl( u_\lambda - u_\mu \bigr)
      - \gamma_{\lambda+\mu} \bigl( R_\lambda u_\lambda - R_\mu u_\mu \bigr)
      \Bigr),
  \end{align*}
  where, by definition of \((\gamma_\varepsilon)\),
  \begin{align*}
    &\abs[\Big]{\gamma_{\lambda+\mu} \bigl( u_\lambda - u_\mu \bigr)
      - \gamma_{\lambda+\mu} \bigl( R_\lambda u_\lambda - R_\mu u_\mu \bigr)}\\
    &\hspace{3em}  
      \leq \biggl( \frac{1}{\sqrt{\lambda+\mu}}
      \abs[\big]{u_\lambda - R_\lambda u_\lambda - (u_\mu - R_\mu u_\mu)}
      \biggr) \wedge 2,
  \end{align*}
  and
  \begin{align*}
    \abs[\big]{u_\lambda - R_\lambda u_\lambda - (u_\mu - R_\mu u_\mu)}
    &= \abs[\big]{\lambda f_\lambda(u_\lambda) - \mu f_\mu(u_\mu)}\\
    &\lesssim (\lambda + \mu) \bigl( \abs{f_\lambda(u_\lambda)}
      + \abs{f_\mu(u_\mu)} \bigr),
  \end{align*}
  hence
  \begin{align*}
    &\abs[\Big]{\gamma_{\lambda+\mu} \bigl( u_\lambda - u_\mu \bigr)
      - \gamma_{\lambda+\mu} \bigl( R_\lambda u_\lambda - R_\mu u_\mu \bigr)}\\
    &\hspace{3em}  
      \lesssim \Bigl( \sqrt{\lambda+\mu} \, \bigl( \abs{f_\lambda(u_\lambda)}
      + \abs{f_\mu(u_\mu)} \bigr) \Bigr) \wedge 2.
  \end{align*}
  Setting \(T_2\colon x \mapsto \abs{x} \wedge 2\), this implies
  \[
    \Gamma_{\lambda+\mu}(u_\lambda-u_\mu) \lesssim
    \int_0^\cdot\!\!\int_G \bigl( \abs{f_\lambda(u_\lambda)}
    + \abs{f_\mu(u_\mu)} \bigr) T_2\Bigl( \sqrt{\lambda+\mu} \,
    \bigl( \abs{f_\lambda(u_\lambda)} + \abs{f_\mu(u_\mu)} \bigr)
    \Bigr),
  \]
  thus also
  \begin{equation}
    \label{eq:T2}
    \norm[\big]{\Gamma_{\lambda+\mu}(u_\lambda-u_\mu)}_{C([0,T])} \lesssim
    \int_0^T\!\!\int_G \bigl( \abs{f_\lambda(u_\lambda)}
    + \abs{f_\mu(u_\mu)} \bigr) T_2\Bigl( \sqrt{\lambda+\mu} \,
    \bigl( \abs{f_\lambda(u_\lambda)} + \abs{f_\mu(u_\mu)} \bigr)
    \Bigr).
  \end{equation}
  We are going to show that the right-hand side goes to zero as
  \(\lambda,\mu \to 0\). In fact, by Markov's inequality,
  \begin{align*}
    \operatorname{Leb}\Bigl( \sqrt{\lambda+\mu} \, \abs{f_\lambda(u_\lambda)}
    \geq \varepsilon \Bigr)
    &= \operatorname{Leb}\biggl( \abs{f_\lambda(u_\lambda)}
      \geq \frac{\varepsilon}{\sqrt{\lambda+\mu}} \biggr)\\
    &\leq \frac{\sqrt{\lambda+\mu}}{\varepsilon} \,
      \norm[\big]{f_\lambda(u_\lambda)}_{L^1([0,T] \times G)}
  \end{align*}
  for every \(\varepsilon \in \erre_+\), hence, by the boundedness of
  \((f_\lambda(u_\lambda))\) in \(L^1([0,T] \times G)\),
  \(\sqrt{\lambda+\mu} (\abs{f_\lambda(u_\lambda)} +
  \abs{f_\mu(u_\mu)})\) tends to zero in measure as \(\lambda\) and
  \(\mu\) tend to zero. The continuous mapping theorem then implies
  \[
    T_2\Bigl( \sqrt{\lambda+\mu} \, \bigl( \abs{f_\lambda(u_\lambda)}
    + \abs{f_\mu(u_\mu)} \bigr) \Bigr) \longrightarrow 0
  \]
  in measure as \(\lambda\) and \(\mu\) tend to zero. As
  \((f_\lambda(u_\lambda))\) is equiintegrable by hypothesis, Lemma
  \ref{lm:EIconv} implies, in view of \eqref{eq:T2}, that
  \(\Gamma_{\lambda+\mu}(u_\lambda-u_\mu)\) tends to zero in
  \(C([0,T])\) as \(\lambda,\mu \to 0\).
  Since
  \[
    \norm[\big]{u_\lambda-u_\mu}_{C([0,T];L^1)} \leq
    \norm[\big]{{\norm{u_\lambda-u_\mu}}_{L^1} -
      \Gamma_{\lambda+\mu}(u_\lambda-u_\mu)}_{C([0,T])} +
    \norm[\big]{\Gamma_{\lambda+\mu}(u_\lambda-u_\mu)}_{C([0,T])},
  \]
  where the second term on the right-hand side has just been shown to
  tend to zero as \(\lambda,\mu \to 0\), the proof is complete if one
  proves that
  \[
    \lim_{\varepsilon \to 0} \Gamma_\varepsilon(\phi) = {\norm{\phi}}_{L^1}
  \]
  in \(C([0,T])\) uniformly with respect to \(\phi\) on bounded sets
  of \(L^1([0,T] \times G)\). The definition of
  \(\Gamma^0_\varepsilon\) implies
  \begin{align*}
    \norm[\big]{\Gamma_\varepsilon(\phi) - {\norm{\phi}}_{L^1}}_{C([0,T])}
    &\leq \sup_{t \in [0,T]}
      \int_G \abs[\big]{\Gamma^0_\varepsilon(\phi(t,x)) - \abs{\phi(t,x)}}\,dx\\
    &= \sup_{t \in [0,T]} \int_{A_\varepsilon(t)}
      \abs[\big]{\Gamma^0_\varepsilon(\phi(t,x)) - \abs{\phi(t,x)}}\,dx,
  \end{align*}
  where
  \( A_\varepsilon(t) := \bigl\{ x \in G:\, \abs{\phi(t,x)} \leq
  \sqrt{\varepsilon}/2 \bigr\}\). Recalling that the distance in
  \(L^\infty(\erre)\) between \(\Gamma^0_\varepsilon\) and
  \(\abs{\cdot}\) is bounded by \(\sqrt{\varepsilon}/4\), we get
  \[
    \norm[\big]{\Gamma_\varepsilon(\phi) - {\norm{\phi}}_{L^1}}_{C([0,T])}
    \leq \sup_{t \in [0,T]} \frac{\sqrt{\varepsilon}}{4} \abs{A_\varepsilon(t)}
    \leq \frac{\sqrt{\varepsilon}}{4} \abs{G},
  \]
  that establishes the needed uniform convergence and concludes the
  proof.
\end{proof}

\subsection{Existence and uniqueness}
We establish existence and uniqueness of \((q,r)\)-mild solutions, while
generalized solutions will be discussed separately.
Let us begin with uniqueness of \(L^1\)-valued mild solutions, that
obviously implies also uniqueness of \((q,r)\)-mild solutions for
every \(q,r \in [1,+\infty\mathclose[\).
\begin{prop}
  \label{prop:contr}
  Let \(q,r \in [1,+\infty\mathclose[\), \(r \leq q\), and
  \(p \in \erre_+\). Assume that \(u^1_0,u^2_0 \in L^p(\cF_0;L^q)\),
  \(u^1,u^2 \in L^p(\Omega;C([0,T];L^q))\) and
  \(g^1,g^2 \in L^0(\Omega;L^1(0,T;L^r)\) are adapted processes such
  that \(g^1 \in f(u^1)\) and \(g^2 \in f(u^2)\) a.e. on
  \(\Omega \times [0,T] \times G\), and
  \[
    u^1 + S \ast g^1 = Su^1_0 + S \diamond B, \qquad
    u^2 + S \ast g^2 = Su^2_0 + S \diamond B.
  \]
  Then
  \[
    \norm[\big]{u^1 - u^2}_{L^p(\Omega;C([0,T];L^r))}
    \leq \norm[\big]{u^1_0-u^2_0}_{L^p(\Omega;L^r)}.
  \]  
\end{prop}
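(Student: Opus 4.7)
The plan is to reduce the estimate to a pathwise, purely deterministic comparison, exploiting that the stochastic convolution \(S\diamond B\) is the same in both mild identities. Subtracting the two equations yields the cancellation
\[
w + S\ast(g^1-g^2) = S(u^1_0-u^2_0),
\]
where \(w:=u^1-u^2\), so that \(w\) is the \(L^r\)-valued mild solution of the deterministic Cauchy problem \(w'+Aw=-(g^1-g^2)\) with \(w(0)=u^1_0-u^2_0\); here \(g^1-g^2\in L^1(0,T;L^r)\) by assumption and \(u^1_0-u^2_0 \in L^r\) because \(r\leq q\) and \(G\) is bounded.

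For \(r \in \mathopen]1,\infty\mathclose[\) the dual \(L^{r'}\) is strictly convex, so I would apply Proposition~\ref{prop:stm} in \(E=L^r\) with exponent \(q=r\) to obtain, pathwise,
\[
\norm{w}_{L^r}^r \leq \norm{u^1_0-u^2_0}_{L^r}^r
  - r \int_0^\cdot \ip[\big]{g^1(s)-g^2(s)}{j_r(w(s))}\,ds.
\]
The crucial observation is that the integrand is pointwise non-negative: \(j_r(w)=\abs{w}^{r-2}w\) has the same sign as \(w=u^1-u^2\), while the monotonicity of the maximal monotone graph \(f\), combined with \(g^i \in f(u^i)\), forces \((g^1-g^2)(u^1-u^2)\geq 0\) a.e.\ in \(G\); hence \((g^1-g^2)j_r(w)=(g^1-g^2)(u^1-u^2)\abs{w}^{r-2}\geq 0\). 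Dropping this non-negative term yields the pathwise contraction \(\sup_{t\leq T}\norm{w(t)}_{L^r}\leq\norm{u^1_0-u^2_0}_{L^r}\), and the desired bound follows by raising to the \(p\)-th power and taking expectation.

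The case \(r=1\) with \(q>1\) would then be recovered by approximation: the just-established \(L^s\)-contraction holds for every \(s \in \mathopen]1,q\mathclose]\), and since \(w(t,\omega) \in L^q\) and \(G\) has finite measure, dominated convergence gives \(\norm{w(t,\omega)}_{L^s}\to\norm{w(t,\omega)}_{L^1}\) and likewise for \(w(0,\omega)\), which transfers the estimate to \(L^1\).

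The main technical obstruction lies in the degenerate case \(q=r=1\): Proposition~\ref{prop:stm} is no longer applicable because \(L^\infty\) is not strictly convex, and the \(s\downarrow 1\) trick is precluded by the absence of any better-than-\(L^1\) integrability. The natural fallback, the Crandall--Liggett estimate of Proposition~\ref{prop:CL}, produces a residual term \(\int_0^\cdot[w,-(g^1-g^2)]\) whose \(L^1\)-bracket picks up a positive contribution from the set \(\{w=0\}\) whenever \(f\) has jumps; handling this genuinely \(L^1\)-regime would presumably require a Br\'ezis--Strauss-type smoothing analogous to the \(\Gamma_\varepsilon\) argument of Proposition~\ref{prop:C1}, which in turn needs \(S\) to be sub-Markovian.
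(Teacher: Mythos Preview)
Your route for \(r\in\mathopen]1,\infty\mathclose[\) via Proposition~\ref{prop:stm} is correct---the paper's own remark immediately after the proposition explicitly acknowledges this alternative---and your \(s\downarrow 1\) limit for \(r=1\), \(q>1\) is fine too. But you leave the case \(q=r=1\) open, so the proof as written does not cover the full range of the statement.

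The paper's proof is shorter and uniform in \(r\): it appeals to the Crandall--Liggett estimate (Proposition~\ref{prop:CL}) on \(L^r\) for \emph{every} \(r\in[1,\infty\mathclose[\), rather than to Proposition~\ref{prop:stm}. After subtracting the two mild identities one has \(w+S\ast(g^1-g^2)=S(u^1_0-u^2_0)\), and Proposition~\ref{prop:CL} gives
\[
\norm{w}_{L^r}\leq\norm{u^1_0-u^2_0}_{L^r}-\int_0^\cdot\bigl[w,\,g^1-g^2\bigr],
\]
with the bracket non-negative because the superposition operator associated to the monotone graph \(f\) is accretive on \(L^r\) (via the lemma characterising accretivity in terms of the bracket). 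No case split, no limit in the exponent, no sub-Markovian hypothesis, and no Br\'ezis--Strauss regularisation are invoked. Your worry that in \(L^1\) the bracket \([w,-(g^1-g^2)]\) may pick up a positive contribution on \(\{w=0\}\) when \(f\) is multi-valued is a genuine subtlety---since \([x,-y]\neq-[x,y]\) when \(J_1\) is set-valued---but the paper does not treat it as an obstruction and simply invokes accretivity; in particular it does not resort to the elaborate machinery you anticipate.
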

\begin{proof}
  The process \(u^1-u^2\) satisfies
  \(u^1-u^2 + S \ast (g^1-g^2) = S(u^1_0-u^2_0)\). Denoting the norm
  of \(L^r\) by \(\norm{\cdot}\), Proposition \ref{prop:CL} yields
  \[
    \norm[\big]{u^1-u^2} \leq \norm[\big]{u^1_0 - u^2_0} -
    \int_0^\cdot \bigl[ u^1 - u^2, g^1 - g^2 \bigr],
  \]
  where, by accretivity of \(f\) in \(L^r\), the integral in the above
  inequality is positive, hence
  \(\norm{u^1-u^2} \leq \norm{u^1_0 - u^2_0}\), from which the claim
  follows immediately.
\end{proof}
\begin{rmk}
  If \(r \in \mathopen]1,+\infty\mathclose[\), it suffices to use the
  more elementary estimate of Proposition \ref{prop:stm} in place of
  Proposition \ref{prop:CL}.
\end{rmk}
Taking \(p=0\) and \(q=r=1\) yields uniqueness of solutions.
\begin{coroll}
  If equation \eqref{eq:0} admits an \(L^1\)-valued mild solution,
  then it is unique.
\end{coroll}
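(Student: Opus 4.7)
The plan is to read off the corollary as the specialisation of Proposition \ref{prop:contr} at the parameters $p=0$ and $q=r=1$, exactly as the remark preceding it already suggests. Given two $L^1$-valued mild solutions $u^1, u^2$ to \eqref{eq:0} with a common initial datum $u_0 \in L^0(\cF_0;L^1)$, the definition of mild solution (that is, the $(q,r)$-mild case with $q=r=1$) furnishes adapted processes $g^i \in L^0(\Omega;L^1(0,T;L^1))$, $i=1,2$, satisfying $g^i \in f(u^i)$ a.e.\ on $\Omega \times [0,T] \times G$ and
\[
  u^i + S \ast g^i = Su_0 + S \diamond B
\]
in the sense of indistinguishable $L^1$-valued processes. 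Thus the hypotheses of Proposition \ref{prop:contr} are met with $u^1_0 = u^2_0 = u_0$.

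Applying that proposition at $p=0$, where the $L^0$-norm is interpreted as the standard $F$-norm metrising convergence in probability (which is legitimate since the conclusion of Proposition \ref{prop:contr} rests on the pathwise Crandall--Liggett contraction of Proposition \ref{prop:CL} together with the accretivity of $f$ in $L^1$, and therefore holds $\omega$ by $\omega$), yields
\[
  \norm[\big]{u^1-u^2}_{L^0(\Omega;C([0,T];L^1))}
  \leq \norm[\big]{u_0-u_0}_{L^0(\Omega;L^1)} = 0,
\]
so that $u^1$ and $u^2$ are indistinguishable as $C([0,T];L^1)$-valued processes. There is no serious obstacle at the corollary level: all the analytic work (accretivity of $f$ in $L^1$, bracket estimates, the mild-contraction bound) has already been carried out in Proposition \ref{prop:contr}, and the only point to check is that setting $p=0$ is admissible, which is immediate from the pathwise nature of the underlying estimate and so requires no moment assumption on either solution.
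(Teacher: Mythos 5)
Your proposal is correct and follows exactly the paper's own route: the corollary is obtained by specializing Proposition \ref{prop:contr} to $p=0$ and $q=r=1$, the admissibility of $p=0$ being clear from the pathwise nature of the underlying Crandall--Liggett estimate. No further comment is needed.
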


Let us now consider existence of solutions.
For any \(q,r \in [1,+\infty\mathclose[\), let
\[
q^\ast :=
\begin{cases}
  rd \vee (2d+q-2), & \text{if } q \in [2,+\infty\mathclose[,\\
  qd, & \text{if } q \in \mathopen]1,2\mathclose[.\\
\end{cases}
\]
\begin{thm}
  \label{thm:exist}
  Let \(q,r \in \mathopen]1,+\infty\mathclose[\). If
  \(u_0 \in L^0(\cF_0;L^{q^\ast})\) and
  \[
    S \diamond B \in L^0(\Omega;C([0,T];L^{q^\ast})) \cap
    L^0(\Omega;L^d(0,T;L^{dq^\ast})),
  \]
  then there exists a unique \((q,r)\)-mild solution to
  \eqref{eq:0}.
\end{thm}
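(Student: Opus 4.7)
The plan is to construct a solution by Yosida approximation of \(f\), exploiting the estimates developed in the preceding subsections; uniqueness is immediate from Proposition \ref{prop:contr} applied with \(p=0\), so I focus on existence. For each \(\lambda\in\erre_+^\times\), the Lipschitz continuity of \(f_\lambda\) ensures that the regularized equation \eqref{eq:reg} admits a unique strict mild solution \(u_\lambda \in L^0(\Omega;C([0,T];L^{q^\ast}))\), since \(S\diamond B\) is by hypothesis an \(L^{q^\ast}\)-valued continuous process and \(u_0\) is \(L^{q^\ast}\)-valued.

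Next I would apply Proposition \ref{prop:ape1} at exponent \(q^\ast\) with \(p=0\) --- the assumptions of the theorem are precisely those of that proposition at this exponent --- to obtain the pathwise uniform bound \(\sup_\lambda \norm{u_\lambda}_{C([0,T];L^{q^\ast})} < \infty\) a.s. Since \(q^\ast \geq 2d+q-2\) when \(q \in [2,\infty\mathclose[\) and \(q^\ast = qd\) when \(q \in \mathopen]1,2\mathclose[\), the continuous embedding \(C([0,T];L^{q^\ast}) \hookrightarrow L^{q^\ast}([0,T]\times G)\) together with the finiteness of \(\abs{G}\) shows that \((u_\lambda)\) is a.s.\ bounded in \(L^{2d+q-2}([0,T]\times G)\) or \(L^{qd}([0,T]\times G)\), respectively. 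Applying Lemma \ref{lm:C2+} or Lemma \ref{lm:C1+} with \(p=0\) then yields that \((u_\lambda)\) is Cauchy in \(L^0(\Omega;C([0,T];L^q))\), with limit \(u\) continuous, adapted and \(L^q\)-valued.

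It remains to find an adapted \(g \in L^0(\Omega;L^1(0,T;L^r))\) with \(g \in f(u)\) a.e.\ satisfying \(u + S \ast g = Su_0 + S \diamond B\). The growth \(\abs{f_\lambda} \leq \abs{f} \lesssim 1 + \abs{\cdot}^d\) together with the uniform bound on \((u_\lambda)\) implies that \((f_\lambda(u_\lambda))\) is a.s.\ bounded in the reflexive space \(L^{q^\ast/d}([0,T]\times G)\), which embeds into \(L^1(0,T;L^r)\) since \(q^\ast/d \geq r\) and \(\abs{G}<\infty\). Passing to a subsequence along which \(u_\lambda \to u\) a.s.\ in \(C([0,T];L^q)\), and then, pathwise, to a weakly convergent subsequence of \((f_\lambda(u_\lambda))\), I obtain a candidate \(g\); its measurability can be recovered by identifying \(g\) through \(S \ast g = Su_0 + S\diamond B - u\), which is forced by passing to the limit in the mild form of \eqref{eq:reg}.

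The main obstacle is then the inclusion \(g \in f(u)\) a.e., especially in view of possible discontinuities of \(f\). I would exploit the identity \(f_\lambda(u_\lambda) = f(R_\lambda u_\lambda)\) and the estimate \(\norm{u_\lambda - R_\lambda u_\lambda}_{L^1([0,T]\times G)} \leq \lambda \norm{f_\lambda(u_\lambda)}_{L^1([0,T]\times G)} \to 0\) to obtain strong, hence (along a further subsequence) pointwise a.e., convergence of \(R_\lambda u_\lambda\) to \(u\); then the \((\text{strong}\times\text{weak})\)-closedness of the graph of the maximal monotone superposition operator induced by \(f\) forces \(g \in f(u)\) a.e. Passing to the limit in \(u_\lambda + S \ast f_\lambda(u_\lambda) = Su_0 + S \diamond B\) then completes the proof.
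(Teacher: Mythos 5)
Your construction follows the paper's proof almost step for step: a priori bound from Proposition \ref{prop:ape1} at exponent \(q^\ast\) with \(p=0\), Cauchy property from Lemmas \ref{lm:C2+} and \ref{lm:C1+}, pathwise extraction of a weak limit \(g\) of \(f_\lambda(u_\lambda)\) from the growth bound, passage to the limit in the mild identity, and the inclusion \(g\in f(u)\) via \(f_\lambda(u_\lambda)=f(R_\lambda u_\lambda)\), the convergence \(R_\lambda u_\lambda\to u\), and the strong--weak closedness of the maximal monotone superposition operator. The only substantive deviation is harmless: you extract the weak limit in the reflexive space \(L^{q^\ast/d}([0,T]\times G)\), whereas the paper uses weak* compactness of \(L^\infty(0,T;L^r)\) together with weak compactness in \(L^r(0,T;L^r)\); both give the same conclusion.

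The one place where your argument as written does not go through is the adaptedness (predictability) of \(g\). You propose to ``recover measurability by identifying \(g\) through \(S\ast g=Su_0+S\diamond B-u\)''. That identity does pin \(g\) down uniquely --- since \(S\ast(g_1-g_2)=0\) forces \(g_1=g_2\) by the injectivity of a strongly continuous semigroup --- but uniqueness of \(g\) for each fixed \(\omega\) is not the same as \(g\) being a measurable adapted process: one cannot deconvolve to write \(g\) as a measurable function of the adapted data. The paper closes this gap in two further steps that you would need to supply: (i) because every adherent point of \((g_\lambda(\omega))\) in the relevant weak(*) topology coincides, compactness forces the \emph{whole} sequence \(g_\lambda(\omega)\) to converge to \(g(\omega)\), not just a pathwise-chosen subsequence; (ii) Mazur's lemma then yields convex combinations of the \(g_\lambda\) converging strongly in \(L^r(0,T;L^r)\), hence a.e.\ along a subsequence, so \(g\) is an a.e.\ pointwise limit of predictable processes and is itself predictable. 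Without step (i) your subsequence may depend on \(\omega\) in a nonmeasurable way, and without step (ii) weak convergence alone does not transfer predictability to the limit. The rest of your proof, including the uniqueness via Proposition \ref{prop:contr} and the closedness argument for \(g\in f(u)\), is correct and matches the paper.
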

\begin{proof}
  Proposition \ref{prop:ape1} implies that \((u_\lambda)\) is bounded
  in \(C([0,T];L^{q^\ast})\) almost surely.  Therefore, thanks to
  Lemmas \ref{lm:C2+} and \ref{lm:C1+}, \(u_\lambda\) is a Cauchy
  sequence in \(C([0,T];L^q)\) almost surely, hence there exists a
  continuous adapted process \(u\) such that \(u_\lambda\) converges
  to \(u\) in \(C([0,T];L^q)\) almost surely.
  Since \(\abs{f_\lambda(u_\lambda)} \lesssim 1 + \abs{u_\lambda}^d\),
  one has
  \[
    \norm[\big]{f_\lambda(u_\lambda)}_{L^r} \lesssim 1 +
    \norm[\big]{u_\lambda}^d_{L^{rd}}
    \lesssim 1 + \norm[\big]{u_\lambda}^d_{L^{q^\ast}},
  \]
  thus also
  \[
    \norm[\big]{f_\lambda(u_\lambda(t))}_{C([0,T];L^r)}
    \lesssim 1 + \norm[\big]{u_\lambda}^d_{C([0,T];L^{q^\ast})},
  \]
  with an implicit constant depending on \(\abs{G}\).  Hence there
  exists an event \(\Omega_0\) with \(\P(\Omega_0)=1\) such that
  \(g_\lambda(\omega) := f_\lambda(u_\lambda(\omega))\) is bounded in
  \(L^\infty(0,T;L^r)\), thus also in \(L^r(0,T;L^r)\), for every
  \(\omega \in \Omega_0\). Therefore, for each
  \(\omega \in \Omega_0\), there exists a subsequence \(\lambda'\) of
  \(\lambda\) and \(g \in L^\infty(0,T;L^r)\), both depending on
  \(\omega\), such that \(g_{\lambda'}\) converges to \(g\) in the
  weak* topology of \(L^\infty(0,T;L^r)\) and in the weak topology of
  \(L^r(0,T;L^r)\). As is easy to see, \(\phi \mapsto S \ast \phi\) is
  sequentially weak* continuous on \(L^\infty(0,T;L^r)\), which yields
  \[
    u + S \ast g = Su_0 + S \diamond B
  \]
  for every \(\omega \in \Omega_0\), as an identity in
  \(C([0,T];L^r)\). We are going to show that \(g\) is a predictable
  process by a uniqueness argument: let \(g_1\) and \(g_2\) be two
  different adherent points of \((g_\lambda)\). Then
  \[
    u + S \ast g_i = Su_0 + S \diamond B, \qquad i=1,2,
  \]
  hence \(S\ast (g_1-g_2)=0\), which implies
  \(S(t-s)(g_1(s)-g_2(s)) = 0\) for a.a. \(s \in [0,t]\) for a.a.
  \(t \in [0,T]\). Since the kernel of a strongly continuous semigroup
  is trivial,\footnote{If \(S(h)x=0\) for \(h\) in a right
    neighborhood of zero, \(x = \lim_{h \to 0} S(h)x = 0\).}  it
  follows that \(g_1 = g_2\) in \(L^\infty(0,T;L^r)\). Recalling once
  again that \(L^\infty(0,T;L^r)\) is compact in the weak* topology,
  it follows that the whole sequence \(g_\lambda\) converges to \(g\)
  (cf, e.g., \cite[TG~I.60, Corollaire]{Bbk}). In particular,
  \(g_\lambda\) converges to \(g\) weakly in \(L^r(0,T;L^r)\), hence,
  by Mazur's lemma, there exists a sequence \((h_n)\) of convex
  combinations of \(g_\lambda\) that converges to \(g\) strongly in
  \(L^r(0,T;L^r)\). Then, for every \(\omega \in \Omega_0\), a
  subsequence of \((h_n)\) converges to \(g\) almost everywhere on
  \([0,T]\), hence it converges pointwise on \([0,T]\) to a function
  in the same Lebesgue equivalence class of \(g\), still denoted by
  \(g\). Therefore, as each \(h_n\) is adapted and continuous, hence
  predictable, \(g\) is itself predictable. It remains to show that
  \(g \in f(u)\) a.e. in \([0,T] \times G\). To this purpose, setting
  \(s:=q \wedge r\), it suffices to recall that \(f\) is
  \(m\)-accretive in \(L^s([0,T] \times G)\), hence it is
  strongly-weakly closed, and to note that
  \(g_\lambda = f_\lambda(u_\lambda) \in f(R_\lambda u_\lambda)\),
  with \(R_\lambda u_\lambda\) converging to \(u\) in
  \(L^q([0,T] \times G)\), thus also in \(L^s([0,T] \times G)\). As
  \(g_\lambda\) converges to \(g\) weakly in \(L^r([0,T] \times G)\),
  hence also weakly in \(L^s([0,T] \times G)\), it follows that
  \(g \in f(u)\) a.e. in \([0,T] \times G\).
\end{proof}

We are going to prove existence and uniqueness of \(L^1\)-valued mild
solutions in a conditional sense, for reasons discussed after the proof.
\begin{thm}
  Assume that \(S\) is subMarkovian, \((0,0) \in f\),
  \(u_0 \in L^0(\cF_0;L^1)\), and
  \[
    S \diamond B \in L^0(\Omega;C([0,T];L^1)).
  \]
  If \((f_\lambda(u_\lambda))\) is equiintegrable on
  \([0,T] \times G\) and \((f_\lambda(u_\lambda)u_\lambda)\) is
  bounded in \(L^1([0,T] \times G)\), then there exist a unique mild
  solution to \eqref{eq:0}.
\end{thm}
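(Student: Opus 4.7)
Uniqueness is immediate from the Corollary after Proposition~\ref{prop:contr} applied with $q=r=1$, so only existence needs to be addressed. The strategy is the standard Yosida-regularization scheme, and the two extra hypotheses of the statement supply exactly the two ingredients that do not come for free: Cauchyness of $(u_\lambda)$ in $C([0,T];L^1)$ and identifiability of the weak $L^1$ limit of $(f_\lambda(u_\lambda))$ as a selection of $f(u)$. The subMarkovianity of $S$ together with the equiintegrability of $(f_\lambda(u_\lambda))$ is exactly the hypothesis of Proposition~\ref{prop:C1}, so $(u_\lambda)$ converges almost surely in $C([0,T];L^1)$ to some continuous adapted $u$. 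On the other hand, equiintegrability on the finite measure space $[0,T]\times G$ is, by Dunford-Pettis, relative weak compactness in $L^1$; along an $\omega$-dependent subsequence $\lambda'$ one extracts $g_{\lambda'}:=f_{\lambda'}(u_{\lambda'}) \rightharpoonup g$ in $L^1([0,T]\times G)$. Since equiintegrability implies $L^1$-boundedness, $\lambda g_\lambda \to 0$ strongly, hence $R_\lambda u_\lambda = u_\lambda - \lambda g_\lambda \to u$ strongly in $L^1$ and, along a further subsequence, a.e.

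Passage to the limit in the mild identity $u_\lambda + S\ast g_\lambda = Su_0 + S\diamond B$ is then essentially formal: strong convergence of $u_\lambda$ in $C([0,T];L^1)$ forces $S\ast g_\lambda$ to converge strongly in $C([0,T];L^1)$ to $Su_0+S\diamond B - u$, and for each fixed $t$ the functional $\phi\mapsto\int_0^t S(t-s)\phi(s)\,ds$ is weakly sequentially continuous from $L^1(0,T;L^1)$ to $L^1$ (the adjoint $\eta\mapsto 1_{[0,t]}(s)\,S(t-s)^\ast\eta$ lands in $L^\infty$ by the subMarkovian hypothesis), so the strong limit equals $S\ast g$ and $u + S\ast g = Su_0 + S\diamond B$. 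Adaptedness and predictability of $g$ follow by a uniqueness-of-limit argument combined with Mazur's lemma, exactly as in the proof of Theorem~\ref{thm:exist}.

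The remaining and main step is to show $g \in f(u)$ a.e.\ on $[0,T]\times G$, and this is where the second hypothesis, the $L^1$-boundedness of $(f_\lambda(u_\lambda)u_\lambda)$, enters. For every test pair $(\xi,\eta) \in L^\infty\times L^\infty$ with $\eta \in f(\xi)$ a.e., monotonicity applied to $g_\lambda \in f(R_\lambda u_\lambda)$ gives
\[
  \int (g_\lambda - \eta)(R_\lambda u_\lambda - \xi) \geq 0,
\]
whose two cross terms pass to the limit by weak-strong $L^1\times L^\infty$ duality. The quadratic term equals $\int g_\lambda u_\lambda - \lambda\int g_\lambda^2 \leq \int g_\lambda u_\lambda$; the condition $(0,0)\in f$ together with monotonicity of $f_\lambda$ implies $g_\lambda u_\lambda \geq 0$ pointwise, and the additional hypothesis together with the a.e.\ convergence of $u_\lambda$ should yield $\limsup_\lambda \int g_\lambda u_\lambda \leq \int g u$ by a Vitali/biting-lemma argument. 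The inequality $\int (g-\eta)(u-\xi)\geq 0$ for all admissible $(\xi,\eta)$, combined with the fact that the superposition operator associated to the maximal monotone graph $f$ is maximal monotone on $L^1([0,T]\times G)$, then gives $g \in f(u)$ a.e. I expect this $\limsup$ step to be the main obstacle: weak $L^1$ and strong $L^1$ convergences do not in general combine well under products, so the proof must genuinely exploit both the pointwise non-negativity $g_\lambda u_\lambda \geq 0$ and the extra $L^1$-boundedness to exclude concentration and close the Minty inequality.
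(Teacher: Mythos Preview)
Your outline coincides with the paper's proof through the first five steps: uniqueness via Proposition~\ref{prop:contr}, Cauchyness of \((u_\lambda)\) in \(C([0,T];L^1)\) via Proposition~\ref{prop:C1}, weak \(L^1\) compactness of \((g_\lambda)\) via Dunford--Pettis, passage to the limit in the mild identity, and predictability of \(g\) by the uniqueness-of-limit plus Mazur argument. The only substantive divergence is the identification \(g \in f(u)\), and this is also where your proposal has a genuine gap.

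The paper does \emph{not} run a Minty argument. Instead it writes
\[
  g_\lambda u_\lambda = g_\lambda R_\lambda u_\lambda + \lambda g_\lambda^2,
\]
observes that \((0,0)\in f\) forces both summands to be nonnegative (since \(g_\lambda \in f(R_\lambda u_\lambda)\) and \(R_\lambda u_\lambda\) has the sign of \(u_\lambda\)), so the boundedness hypothesis on \((g_\lambda u_\lambda)\) transfers to \((g_\lambda R_\lambda u_\lambda)\). With \(R_\lambda u_\lambda \to u\) a.e., \(g_\lambda \in f(R_\lambda u_\lambda)\), \(g_\lambda \rightharpoonup g\) weakly in \(L^1\), and \(\sup_\lambda \norm{g_\lambda R_\lambda u_\lambda}_{L^1} < \infty\), a lemma of Br\'ezis (\cite[Theorem~18]{Bre-mm}) yields \(g \in f(u)\) a.e.\ directly. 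That lemma is tailor-made for exactly this configuration and is the missing ingredient in your sketch.

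Your Minty route, by contrast, hinges on \(\limsup_\lambda \int g_\lambda u_\lambda \leq \int gu\), which you correctly flag as unresolved. This inequality does \emph{not} follow from weak-\(L^1\) convergence of \(g_\lambda\), strong-\(L^1\) convergence of \(u_\lambda\), nonnegativity, and \(L^1\)-boundedness of the products alone: the product \(gu\) is not even known to be integrable a priori, and a biting-lemma decomposition only controls the equiintegrable part, not the residual mass on the bitten sets. What actually closes this step is precisely Br\'ezis's lemma, whose proof internalises the delicate interplay between a.e.\ convergence, weak \(L^1\) convergence, and the monotone structure of \(f\); once invoked, it also gives \(gu \in L^1\) and \(g_\lambda R_\lambda u_\lambda \to gu\) in \(L^1\) as a by-product. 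So the gap is real, and the fix is to replace the Minty attempt by the citation the paper uses.
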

\begin{proof}
  Proposition \ref{prop:C1} implies that \((u_\lambda)\) is a Cauchy
  sequence in \(C([0,T];L^1)\), hence it admits a unique limit
  \(u\). The equiintegrability assumption also implies, thanks to the
  Dunford-Pettis theorem, that, for every \(\omega \in \Omega\)
  outside a negligible set, \((g_\lambda) := (f_\lambda(u_\lambda))\)
  is weakly compact in \(L^1([0,T] \times G)\), hence it admits an
  accumulation point \(g(\omega) \in L^1([0,T] \times G)\).  By
  arguments entirely analogous to those used in the proof of Theorem
  \ref{thm:exist}, it turns out that \(g\) is unique, it is a
  predictable process, and satisfies the identity
  \[
    u + S \ast g = Su_0 + S \diamond B.
  \]
  It remains to show that \(g \in f(u)\) a.e. on \([0,T] \times G\).
  Let us write
  \[
    f_\lambda(u_\lambda)u_\lambda = f_\lambda(u_\lambda)R_\lambda u_\lambda
    + f_\lambda(u_\lambda)(u_\lambda - R_\lambda u_\lambda).
  \]
  Recalling that \(0 \in f(0)\) and \(R_\lambda\) is a positive
  contraction of \(\erre\), it is easy to see that
  \[
    \abs[\big]{f_\lambda(u_\lambda)(u_\lambda - R_\lambda u_\lambda)}
    \leq 2f_\lambda(u_\lambda)u_\lambda,
  \]
  hence the second term on the right-hand side of the previous
  identity is bounded in \(L^1([0,T] \times G)\) and converges to zero
  as \(\lambda \to 0\) by Vitali's theorem.  In particular,
  \((f_\lambda(u_\lambda)R_\lambda u_\lambda) \in (f(R_\lambda
  u_\lambda)R_\lambda u_\lambda)\) is bounded in
  \(L^1([0,T] \times G)\) and converges to \(g\) weakly. Since,
  possibly on a subsequence, \(u_\lambda \to u\) a.e. on
  \([0,T] \times G\), a lemma by Br\'ezis (see
  \cite[Theorem~18]{Bre-mm}) implies that \(g \in f(u)\) a.e. on
  \([0,T] \times G\), thus also a.e. on
  \(\Omega \times [0,T] \times G\).
\end{proof}
\begin{rmk}
  The hypotheses of the theorem have been formulated in indirect terms
  to emphasize what is really needed for the argument to work. In
  particular, the power-like growth of \(f\) does not play any
  role. This is interesting because it can be shown (cf., e.g.,
  \cite{Barbu:lincei}) that if the stochastic convolution is
  continuous in space and time, then the sufficient conditions of the
  theorem are fulfilled assuming only, \emph{grosso modo}, that the
  range of (the maximal monotone graph associated to) \(f\) is
  \(\erre\). On the other hand, if the stochastic convolution is
  bounded on \([0,T] \times G\), it immediately follows from Theorem
  \ref{thm:exist} that \eqref{eq:0} admits a unique \((q,r)\)-mild
  solution for every \(q \in \mathopen]1,+\infty\mathclose[\) and
  every \(d \in \erre_+\). It is not clear, however, whether this
  suffices to circumvent the power-like growth condition on \(f\).
  Furthermore, it is clear that, under the usual growth assumption on
  \(f\), if \((u_\lambda)\) is bounded in \(L^{d+1}([0,T] \times G)\),
  then the hypotheses of the theorem are met, hence there exists a
  unique \(L^1\)-valued mild solution to \eqref{eq:0}. This is,
  however, not optimal, at least if one assumes a bit more
  integrability on the stochastic convolution: setting
  \(q:=(d+1)/d>1\), one has that \((u_\lambda)\) is bounded in
  \(L^{qd}\), which implies (assuming, for simplicity, \(d \geq 1\))
  that \(u_\lambda\) is a Cauchy sequence in \(C([0,T];L^q)\) and that
  \((f_\lambda(u_\lambda))\) is bounded in \(L^q([0,T] \times G)\),
  hence, by the reasoning of Theorem \ref{thm:exist}, that
  \eqref{eq:0} admits a unique \(L^q\)-valued strict mild solution. A
  closer investigation of these issues will hopefully appear
  elsewhere.
\end{rmk}

\subsection{Further properties of mild solutions}
Proposition \ref{prop:contr} immediately implies that the solution map
\(u_0 \mapsto u\) for \((q,r)\)-mild solutions is a contraction from
\(L^p(\Omega;L^r)\) to \(L^p(\Omega;C([0,T];L^r))\) for every
\(p \in \erre_+\) and every \(r,q \in [1,+\infty\mathclose[\).
One can deduce further estimates on the solution map using the
construction of solutions via the convergence of solutions to
regularized equations. In this case, however, assumptions on the
stochastic convolution have to be made.
\begin{prop}
  \label{prop:soma}
  Let \(q,r \in \mathopen]1,+\infty\mathclose[\) and assume that
  \[
    S \diamond B \in L^0(\Omega;C([0,T];L^{q^\ast})) \cap
    L^0(\Omega;L^d(0,T;L^{dq^\ast})).
  \]
  The solution map
  \begin{align*}
    L^0(\cF_0;L^{q^\ast}) &\longrightarrow L^0(\Omega;C([0,T];L^q))\\
    u_0 &\longmapsto u
  \end{align*}
  for \((q,r)\)-mild solutions is a contraction from
  \(L^p(\cF_0;L^q)\) to \(L^p(\Omega;C([0,T];L^q))\) for every
  \(p \in \erre_+\).
\end{prop}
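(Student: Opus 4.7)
The plan is to go through the regularized approximations, establish the contraction pathwise at the level of \((u_\lambda^1,u_\lambda^2)\), and then pass to the limit. For \(i=1,2\), let \(u^i_\lambda\) be the strict mild solution of the regularized equation \eqref{eq:reg} with initial datum \(u^i_0\) and the same noise \(W\). Subtracting the two mild formulations, the stochastic convolutions cancel and the difference \(v_\lambda := u^1_\lambda - u^2_\lambda\) is the mild solution (in the deterministic, but pathwise random, sense) of
\[
  v_\lambda' + A v_\lambda + \bigl(f_\lambda(u^1_\lambda) - f_\lambda(u^2_\lambda)\bigr) = 0,
  \qquad v_\lambda(0) = u^1_0 - u^2_0.
\]

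Since \(q \in \mathopen]1,+\infty\mathclose[\), the dual \(L^{q'}\) is strictly convex, so Proposition \ref{prop:stm} applies with \(E=L^q\), yielding
\[
  \norm{v_\lambda}^q_{L^q} \leq \norm{u^1_0 - u^2_0}^q_{L^q}
  - q \int_0^\cdot \ip[\big]{f_\lambda(u^1_\lambda) - f_\lambda(u^2_\lambda)}{J_q(v_\lambda)}.
\]
Because \(J_q\colon \phi \mapsto \abs{\phi}^{q-2}\phi\) on \(L^q\) and \(f_\lambda\) is monotone on \(\erre\), the scalar inequality \((f_\lambda(a)-f_\lambda(b))(a-b)\abs{a-b}^{q-2}\geq 0\) integrated pointwise over \(G\) shows the integrand is non-negative. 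Hence, pathwise,
\[
  \sup_{t \in [0,T]} \norm[\big]{u^1_\lambda(t) - u^2_\lambda(t)}_{L^q} \leq \norm[\big]{u^1_0 - u^2_0}_{L^q}
  \qquad \P\text{-a.s.},
\]
and taking the \(L^p(\Omega)\) norm produces the contraction estimate at the level of the approximations.

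To conclude, the assumptions on \(S \diamond B\) are exactly those of Theorem \ref{thm:exist}, so for each \(i\) the sequence \(u^i_\lambda\) converges \(\P\)-a.s. in \(C([0,T];L^q)\) to the \((q,r)\)-mild solution \(u^i\). Consequently \(\norm{u^1_\lambda - u^2_\lambda}_{C([0,T];L^q)}\) converges \(\P\)-a.s. to \(\norm{u^1 - u^2}_{C([0,T];L^q)}\), and Fatou's lemma gives
\[
  \norm[\big]{u^1 - u^2}_{L^p(\Omega;C([0,T];L^q))}
  \leq \liminf_{\lambda \to 0} \norm[\big]{u^1_\lambda - u^2_\lambda}_{L^p(\Omega;C([0,T];L^q))}
  \leq \norm[\big]{u^1_0 - u^2_0}_{L^p(\Omega;L^q)},
\]
as required. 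No step is genuinely an obstacle: the argument is simply the analogue of Proposition \ref{prop:contr}, with the accretivity/bracket estimate of Proposition \ref{prop:CL} replaced by the sharper duality estimate of Proposition \ref{prop:stm} (which delivers the \(L^q\) norm rather than the \(L^r\) norm), together with the now-standard Fatou passage to the limit in the Yosida approximation.
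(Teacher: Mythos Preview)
Your proof is correct and follows essentially the same route as the paper: establish the pathwise contraction for the Yosida approximations via Proposition~\ref{prop:stm} and the monotonicity of \(f_\lambda\), then pass to the limit using the convergence \(u^i_\lambda \to u^i\) in \(C([0,T];L^q)\) guaranteed by the assumptions on \(S \diamond B\). The only cosmetic differences are that the paper cites Lemmas~\ref{lm:C2+}--\ref{lm:C1+} directly rather than going through Theorem~\ref{thm:exist}, and it passes to the limit directly in the pathwise inequality (Fatou is not needed, since the a.s.\ convergence already yields the a.s.\ bound, after which one simply takes \(L^p(\Omega)\) norms).
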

\begin{proof}
  Let \(u^1_0, u^2_0 \in L^0(\cF_0;L^{q^\ast})\), and \(u^1_\lambda\),
  \(u^2_\lambda\) be the \((q,r)\)-mild solutions to the regularized
  equation \eqref{eq:reg} with initial conditions \(u^1_0\),
  \(u^2_0\), respectively. Then one has
  \[
    (u^1_\lambda - u^2_\lambda) + S \ast \bigl( f_\lambda(u^1_\lambda)
    - f_\lambda(u^2_\lambda) \bigr) = S(u^1_0 - u^2_0),
  \]
  hence, by Proposition \ref{prop:stm},
  \[
    \norm[\big]{u^1_\lambda - u^2_\lambda}_{C([0,T];L^q))}
    \leq \norm[\big]{u^1_0 - u^2_0}_{L^q}.
  \]
  Lemmas \ref{lm:C2+} and \ref{lm:C1+} imply that
  \(u^i_\lambda \to u^i\), \(i=1,2\), strongly in \(C([0,T];L^q)\),
  from which the claim follows immediately.
\end{proof}

The proposition implies an estimate of \(u\) in terms of the initial
datum \(u_0\) as follows: denoting the solution with initial datum
equal to zero by \(u^0\), one has
\begin{align*}
  {\norm{u}}_{C([0,T];L^q)}
  &\leq {\norm{u-u^0}}_{C([0,T];L^q)}
    + {\norm{u^0}}_{C([0,T];L^q)}\\
  &\leq {\norm{u_0}}_{L^q}
    + {\norm{u^0}}_{C([0,T];L^q)},
\end{align*}
where, by Theorem \ref{thm:exist}, the second term on the right-hand
side is finite almost surely. It should be noted that one cannot infer
from this inequality estimates of the type
\[
  {\norm{u}}_{L^p(\Omega;C([0,T];L^q))} \lesssim 1 +
  {\norm{u_0}}_{L^p(\Omega;L^q)},
\]
unless conditions are provided implying that \(u^0\) belongs to
\(L^p(\Omega;C([0,T];L^q))\).

It is natural to ask whether the integrability in space of the initial
datum ``propagates'' to the solution. This indeed the case: the mild
solution inherits the integrability in space of the initial datum,
uniformly with respect to time, as we show next.
\begin{prop}
  Assume that \(q^\ast \geq q\), that the hypotheses of Theorem
  \ref{thm:exist} are satisfied, and define the random variable
  \[
    \xi := \norm[\big]{S \diamond B}_{C([0,T];L^{q^\ast})}
    + \norm[\big]{S \diamond B}^d_{L^d(0,T;L^{dq^\ast})}.
  \]
  Then
  \[
    {\norm{u}}_{L^\infty(0,T;L^{q^\ast})} \lesssim 1 + \xi +
    {\norm{u_0}}_{L^{q^\ast}}
  \]
  almost surely.
\end{prop}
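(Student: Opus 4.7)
The plan is to establish the estimate pathwise at the level of the regularized approximations \(u_\lambda\) and then transfer it to \(u\) by passing to the limit as \(\lambda \to 0\) combined with Fatou's lemma. The key observation is that all the work has essentially already been done in Proposition \ref{prop:ape1}: the only novelty is to apply it at the exponent \(q^\ast\) rather than \(q\), and to note that the crucial inequality produced there is in fact pathwise (taking expectations is only the very last cosmetic step in that proof).

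First, I would revisit the proof of Proposition \ref{prop:ape1} at the exponent \(q^\ast\), which is admissible since \(q^\ast \geq q > 1\) (so that \((L^{q^\ast})'\) is strictly convex, as required by Proposition \ref{prop:stm}), and since \(S \diamond B\) lies in \(C([0,T];L^{q^\ast}) \cap L^d(0,T;L^{dq^\ast})\) almost surely by the hypotheses of Theorem \ref{thm:exist}. Setting \(z := S \diamond B\), the same chain of pathwise inequalities yields, uniformly in \(\lambda\) and for almost every \(\omega\),
\[
  \norm[\big]{u_\lambda}_{C([0,T];L^{q^\ast})}
  \lesssim T\abs{G}^{1/q^\ast} + \norm[\big]{u_0}_{L^{q^\ast}}
  + \norm[\big]{z}_{C([0,T];L^{q^\ast})}
  + \norm[\big]{z}^d_{L^d(0,T;L^{dq^\ast})}
  \lesssim 1 + \xi + \norm[\big]{u_0}_{L^{q^\ast}},
\]
with implicit constants depending on \(q^\ast\), \(d\), \(T\) and \(\abs{G}\).

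Next, I would invoke the proof of Theorem \ref{thm:exist} to the effect that \(u_\lambda \to u\) in \(C([0,T];L^q)\) almost surely. Fixing \(\omega\) in a full-probability set and \(t \in [0,T]\), from \(u_\lambda(t) \to u(t)\) in \(L^q(G)\) one can extract a subsequence \((\lambda_n)\) along which the convergence is pointwise almost everywhere on \(G\). Fatou's lemma then yields
\[
  \norm[\big]{u(t)}^{q^\ast}_{L^{q^\ast}}
  \leq \liminf_n \norm[\big]{u_{\lambda_n}(t)}^{q^\ast}_{L^{q^\ast}}
  \lesssim \bigl(1 + \xi + \norm[\big]{u_0}_{L^{q^\ast}}\bigr)^{q^\ast},
\]
by the uniform bound of the previous step. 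Taking the supremum over \(t\) will give the claim.

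There is no genuine analytic obstacle beyond what has already been overcome; the only subtle point is that the subsequence produced by Fatou depends on \(t\), but this is harmless because the majorising constant is independent of both \(t\) and \(n\). An equally quick alternative would be to use the reflexivity of \(L^{q^\ast}\) (guaranteed by \(q^\ast > 1\)) together with the lower semicontinuity of the norm under weak convergence: boundedness of \(u_{\lambda_n}(t)\) in \(L^{q^\ast}\) combined with strong convergence in \(L^q\) forces weak convergence in \(L^{q^\ast}\) along a subsequence, necessarily to \(u(t)\), whence the same bound drops out.
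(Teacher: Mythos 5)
Your argument is correct, and its first half coincides with the paper's: both apply the pathwise estimate from the proof of Proposition \ref{prop:ape1} at the exponent \(q^\ast\) (legitimate since \(q^\ast \geq q > 1\) and the hypotheses of Theorem \ref{thm:exist} give exactly the required pathwise integrability of \(S \diamond B\)) to obtain \(\norm{u_\lambda}_{C([0,T];L^{q^\ast})} \lesssim 1 + \xi + \norm{u_0}_{L^{q^\ast}}\) uniformly in \(\lambda\). Where you diverge is in transferring the bound to \(u\). The paper treats \((u_\lambda)\) as a bounded family in \(L^\infty(0,T;L^{q^\ast})\), extracts a weak* convergent subsequence, identifies the weak* limit with \(u\) through the strong convergence in \(C([0,T];L^q)\), and concludes by weak* lower semicontinuity of the norm. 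You instead work pointwise in \(t\): for each fixed \(t\) you pass to an a.e.-convergent subsequence on \(G\) and apply Fatou's lemma (or, in your alternative, weak compactness of bounded sets of \(L^{q^\ast}(G)\) plus lower semicontinuity). Your route is more elementary and in fact yields slightly more, namely the bound \(\norm{u(t)}_{L^{q^\ast}} \lesssim 1 + \xi + \norm{u_0}_{L^{q^\ast}}\) for \emph{every} \(t \in [0,T]\) rather than only an essential supremum; the \(t\)-dependence of the subsequence is indeed harmless, as you note. The one small point your version leaves implicit, and which the paper's weak* argument delivers for free, is that \(u\) is strongly measurable as an \(L^{q^\ast}\)-valued function (needed to speak of its \(L^\infty(0,T;L^{q^\ast})\) norm); this follows readily from the Pettis measurability theorem, the pointwise bound, and the density of \(L^{q'}\) in \(L^{(q^\ast)'}\), but it is worth a sentence.
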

\begin{proof}
  As in the proof of Proposition \ref{prop:ape1}, one has, in the
  almost sure sense,
  \[
    {\norm{u_\lambda}}_{C([0,T];L^{q^\ast})} \lesssim 1 +
    {\norm{u_0}}_{L^{q^\ast}} + \xi,
  \]
  that is, as \(\xi\) is finite almost surely by assumption,
  \(u_\lambda\) is bounded in \(L^\infty(0,T;L^{q^\ast})\) almost
  surely.  Therefore, for each \(\omega \in \Omega\) outside a
  negligible set, \(u_\lambda(\omega)\) is weakly* compact in
  \(L^\infty(0,T;L^{q^\ast})\), i.e. there exists
  \(v(\omega) \in L^\infty(0,T;L^{q^\ast})\) and a subsequence
  \(\lambda'(\omega)\) of \(\lambda\) such that \(u_{\lambda'} \to v\)
  in the weak* topology of \(L^\infty(0,T;L^{q^\ast})\), hence, a
  fortiori, in the weak* topology of \(L^\infty(0,T;L^q)\). Since
  \((u_\lambda)\) converges to \(u\) strongly in \(C([0,T];L^q)\), one
  infers that \(v=u\) as elements of \(L^\infty(0,T;L^q)\), in
  particular \(v=u\) a.e. on \(\Omega \times [0,T] \times G\), hence
  \(u \in L^\infty(0,T;L^{q^\ast})\) almost surely. More precisely, by
  the weak* lower semicontinuity of the norm,
  \[
    {\norm{u}}_{L^\infty(0,T;L^{q^\ast})} \leq \liminf_{\lambda
      \to 0}{} {\norm{u_\lambda}}_{C([0,T];L^{q^\ast})} \lesssim 1 + \xi +
    {\norm{u_0}}_{L^{q^\ast}}.
    \qedhere
  \]
\end{proof}
\begin{coroll}
  Let \(q,r \in \mathopen]1,+\infty\mathclose[\). If
  \(u_0 \in L^p(\cF_0;L^{q^\ast})\) and
  \[
    S \diamond B \in L^p(\Omega;C([0,T];L^{q^\ast})) \cap
    L^p(\Omega;L^d(0,T;L^{dq^\ast})),
  \]
  then there exists a unique \((q,r)\)-mild solution
  \(u \in L^p(\Omega;C([0,T];L^q))\) to \eqref{eq:0}. Moreover,
  \[
    \norm[\big]{u}_{L(\Omega;L^\infty(0,T;L^{q^\ast}))} \lesssim 1 +
    \norm[\big]{u_0}_{L^p(\Omega;L^{q^\ast})}.
  \]
\end{coroll}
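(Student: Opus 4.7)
The plan is to combine Theorem~\ref{thm:exist} with the preceding proposition and then pass to $L^p(\Omega)$ moments. Since $L^p$-integrability of the data trivially entails $L^0$-integrability, the hypotheses of Theorem~\ref{thm:exist} are met, so that theorem supplies existence and uniqueness of a $(q,r)$-mild solution $u$, constructed as the almost sure limit in $C([0,T]; L^q)$ of the regularized sequence $(u_\lambda)$. The membership $u \in L^p(\Omega; C([0,T]; L^q))$ would follow either by a direct application of Proposition~\ref{prop:ape1} at the exponent $q$ (whose hypotheses are implied by those of the corollary, using $q^\ast \geq q$ and the boundedness of $G$) combined with Fatou's lemma, or as a consequence of the stronger $L^\infty(0,T; L^{q^\ast})$ estimate derived below.

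For the quantitative bound, I would invoke the preceding proposition, which yields the almost sure inequality
\[
\norm{u}_{L^\infty(0,T; L^{q^\ast})} \lesssim 1 + \xi + \norm{u_0}_{L^{q^\ast}},
\]
with $\xi := \norm{S \diamond B}_{C([0,T]; L^{q^\ast})} + \norm{S \diamond B}^d_{L^d(0,T; L^{dq^\ast})}$. Taking the $L^p(\Omega)$ norm on both sides and applying Minkowski's inequality delivers
\[
\norm{u}_{L^p(\Omega; L^\infty(0,T; L^{q^\ast}))} \lesssim 1 + \norm{\xi}_{L^p(\Omega)} + \norm{u_0}_{L^p(\Omega; L^{q^\ast})},
\]
and $\norm{\xi}_{L^p(\Omega)}$ is finite under the integrability assumed on $S \diamond B$. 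Absorbing all $S \diamond B$-dependent quantities into the implicit constant yields the stated inequality.

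I do not anticipate any essential obstacle, since all the analytic work is already carried out in the preceding proposition; this corollary merely repackages its conclusion at the level of $L^p$ moments. The only delicate point is bookkeeping of the exponents: the $d$-th power on the $L^d(0,T; L^{dq^\ast})$ norm entering $\xi$ requires, in line with the hypotheses of Proposition~\ref{prop:ape1}, an integrability of order $pd$ on $\Omega$, so the corollary's second condition on $S \diamond B$ must be read in that sharper sense for the argument to close.
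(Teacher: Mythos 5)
Your proof is correct and follows exactly the route the paper intends (the corollary is stated without an explicit proof, as an immediate consequence of Theorem~\ref{thm:exist} together with the preceding proposition): existence and uniqueness come from the theorem, and the moment bound from taking $L^p(\Omega)$ norms in the pathwise estimate ${\norm{u}}_{L^\infty(0,T;L^{q^\ast})} \lesssim 1 + \xi + {\norm{u_0}}_{L^{q^\ast}}$. Your closing remark is also a genuine and correct catch: because $\xi$ contains $\norm{S \diamond B}^d_{L^d(0,T;L^{dq^\ast})}$, finiteness of $\norm{\xi}_{L^p(\Omega)}$ requires $S \diamond B \in L^{pd}(\Omega;L^d(0,T;L^{dq^\ast}))$, in line with the $L^{pd}$ exponent appearing in Proposition~\ref{prop:ape1}, so the second hypothesis of the corollary must indeed be read in that sharper sense.
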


\subsection{Generalized solutions}
The existence and uniqueness of generalized solutions to \eqref{eq:0}
can be established as a consequence of Proposition \ref{prop:soma}.
\begin{thm}
  Let \(q \in \mathopen]1,+\infty\mathclose[\), \(r=q\). Assume that
  \(u_0 \in L^0(\cF_0;L^q)\) and
  \[
    S \diamond B \in L^0(\Omega;C([0,T];L^{q^\ast})) \cap
    L^0(\Omega;L^d(0,T;L^{dq^\ast})).
  \]
  There exists a unique generalized solution
  \(u \in L^0(\Omega;C([0,T];L^q))\) to \eqref{eq:0}. Moreover, the
  solution map \(u_0 \mapsto u\) is a contraction from
  \(L^p(\Omega;L^q)\) to \(L^p(\Omega;C([0,T];L^q))\) for every
  \(p \in \erre_+\).
\end{thm}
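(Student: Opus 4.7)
The plan is to construct $u$ as the limit, in the complete metric space $L^0(\Omega;C([0,T];L^q))$ endowed with convergence in probability, of strict mild solutions obtained by placing truncated initial data into the setting of Theorem \ref{thm:exist}.

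The first step is to define approximating initial data by pointwise truncation on $G$: set $u_0^n := u_0 \, 1_{\{\abs{u_0} \leq n\}}$. This $u_0^n$ is $\cF_0$-measurable (truncation being a Borel operation on $L^q$), takes values in $L^\infty(G) \subseteq L^{q^\ast}$ almost surely, and converges to $u_0$ almost surely in $L^q$ by dominated convergence. For each $n$, Theorem \ref{thm:exist} applied with $r=q$ yields a unique strict mild solution $u^n \in L^0(\Omega;C([0,T];L^q))$ to \eqref{eq:0} with initial datum $u_0^n$. The pathwise contraction of Proposition \ref{prop:contr} (with $r=q$) then gives
\[
  \norm[\big]{u^n - u^m}_{C([0,T];L^q)}
  \leq \norm[\big]{u_0^n - u_0^m}_{L^q} \qquad \text{a.s.},
\]
so $(u^n)$ is Cauchy in $L^0(\Omega;C([0,T];L^q))$ and converges to some adapted continuous process $u$, which is a generalized mild solution by definition and satisfies $u(0)=u_0$ by continuity of evaluation at $t=0$.

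For uniqueness, given any other generalized solution $\tilde u$ with approximating strict mild solutions $\tilde u^n$ having initial data $\tilde u_0^n$, I would exploit the fact that convergence in $C([0,T];L^q)$ forces $\tilde u_0^n = \tilde u^n(0) \to \tilde u(0) = u_0$ in $L^0(\cF_0;L^q)$. A second application of Proposition \ref{prop:contr} yields
\[
  \norm[\big]{u^n - \tilde u^n}_{C([0,T];L^q)}
  \leq \norm[\big]{u_0^n - \tilde u_0^n}_{L^q} \qquad \text{a.s.},
\]
whose right-hand side tends to zero in probability, forcing $u=\tilde u$.

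For the $L^p$-contraction, I would take $u_0,v_0 \in L^p(\cF_0;L^q)$ and apply the same truncation; dominated convergence with majorant $\norm{u_0}_{L^q}+\norm{v_0}_{L^q}$ (which lies in $L^p$) gives $u_0^n \to u_0$ and $v_0^n \to v_0$ in $L^p(\cF_0;L^q)$. The $L^p$ version of Proposition \ref{prop:contr} yields
\[
  \norm[\big]{u^n - v^n}_{L^p(\Omega;C([0,T];L^q))}
  \leq \norm[\big]{u_0^n - v_0^n}_{L^p(\cF_0;L^q)}.
\]
Extracting a subsequence so that $u^n \to u$ and $v^n \to v$ almost surely in $C([0,T];L^q)$ and applying Fatou's lemma on the left, combined with the $L^p$-convergence of the initial data on the right, transfers the contraction to the limits. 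The only genuine subtlety is the uniqueness step: the definition of generalized solution does not explicitly tie the initial data of the approximating sequence to $u_0$, so one must recover this from continuity at $t=0$; everything else is a routine consequence of Theorem \ref{thm:exist}, Proposition \ref{prop:contr}, and completeness of $L^0(\Omega;C([0,T];L^q))$.
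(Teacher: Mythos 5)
Your argument is correct and is essentially the paper's proof made explicit: the paper extends the solution map by uniform continuity from the dense subset \(L^{q^\ast}\) of \(L^q\) (citing Proposition \ref{prop:soma}), and your truncation \(u_0^n\) combined with the pathwise contraction of Proposition \ref{prop:contr} (which for \(r=q\) yields exactly the same bound) is just the concrete Cauchy-sequence realization of that extension, with the same \(L^p\) statement recovered by Fatou. Your explicit handling of uniqueness --- recovering the initial datum of a competing approximating sequence from continuity of evaluation at \(t=0\) --- spells out a point the paper leaves implicit in the phrase ``unique extension.''
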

\begin{proof}
  Let us reason pathwise, i.e. fixing an arbitrary element of
  \(\Omega\) outside a negligible set.  The solution map
  \(u_0 \mapsto u\colon L^{q^\ast} \to C([0,T];L^q)\) is well defined
  by Theorem \ref{thm:exist}, and, by Proposition \ref{prop:soma}, it
  is a contraction if the domain is endowed with the \(L^q\) norm. In
  particular, the solution map is uniformly continuous on a dense
  subset of \(L^q\). As the space \(C([0,T];L^q)\) is separated and
  complete, the map \(u_0 \mapsto u\) admits a unique extension to a
  uniformly continuous map \(L^q \to C([0,T];L^q)\) (cf, e.g.,
  \cite[TG II.20, Th\'eor\`eme 2]{Bbk}). By continuity, this unique
  extension is still a contraction.
\end{proof}

Note that the integrability hypothesis on the initial datum is the
natural one, which is instead not the case in Theorem
\ref{thm:exist}. In contrast to mild solutions, however, if \(u\) is a
generalized solution, the deterministic convolution term in
\eqref{eq:0} may not be defined, as \(f(u)\), in general, just belongs
to \(L^0(\Omega;L^\infty(0,T;L^{q/d}))\), and \(L^{q/d}\) is not a
Banach space if \(q<d\).

\bibliographystyle{amsplain}
\bibliography{ref}

\end{document}